\newtheorem{proposition}{Proposition}[section]
\newtheorem{theorem}[proposition]{Theorem}
\newtheorem{lemma}[proposition]{Lemma}
\newtheorem{corollary}[proposition]{Corollary}
\newtheorem{definition}[proposition]{Definition}
\newtheorem{question}[proposition]{Question}
\theoremstyle{definition}
\newtheorem*{ack}{Acknowledgements}
\theoremstyle{remark}
\newtheorem{remark}[proposition]{Remark}
\numberwithin{equation}{section}
\newcommand{\Div}{\mathrm{div}}
\begin{document}
\title[Asymptotic Behaviour of WIAMCF]{Asymptotic Behaviour of the weak inverse anisotropic mean curvature flow}

\address{School of Mathematical Sciences, University of Science and Technology of China, Hefei 230026, P.R. China}
\author[C. Gao]{Chaoqun Gao}
\email{\href{mailto:gaochaoqun@mail.ustc.edu.cn}{gaochaoqun@mail.ustc.edu.cn}}
\author[Y. Wei]{Yong Wei}
\email{\href{mailto:yongwei@ustc.edu.cn}{yongwei@ustc.edu.cn}}
\author[R. Zhou]{Rong Zhou}
\email{\href{mailto:zhourong@mail.ustc.edu.cn}{zhourong@mail.ustc.edu.cn}}

\date{\today}
\subjclass[2020]{53C42; 53E10}
\keywords{Inverse anisotropic mean curvature flow, weak solution, gradient estimate, asymptotic behaviour}
%\thanks{}

\begin{abstract}
We first establish a local gradient estimate for anisotropic $p$-harmonic functions. A key feature of our estimate is that the constant remains bounded as $p\to 1$; consequently, in the limit $p\to 1$, this estimate yields the local gradient estimate for weak solutions of the inverse anisotropic mean curvature flow (IAMCF). As an application, we show that the weak IAMCF is asymptotic to the expanding Wulff shape solution at the infinity, thereby extending the result of Huisken and Ilmanen in \cite{HI01} to the anisotropic case.
\end{abstract}

\maketitle
\tableofcontents

%-------------------------------------------------------------------------

\section{Introduction}\label{sec:intro}
% 引言内容（占全文10-15%）
The inverse mean curvature flow (IMCF) in the Euclidean space $\mathbb{R}^n$ is the evolution of hypersurfaces according to the equation 
\begin{equation}\label{s1.imcf}
    \frac{\partial X}{\partial t}=\frac{1}{H}\nu,
\end{equation}
where $H,\nu$ denote the mean curvature and outer unit normal of the evolving hypersurface $M_t$. Gerhardt \cite{Ge90} and Urbas \cite{Ur90} proved that if the initial hypersurface is smooth, mean convex and star-shaped, then the solution $M_t$ of \eqref{s1.imcf} expands to the infinity and the rescaled hypersurface $e^{-\frac{t}{n-1}}M_t$ converges to a round sphere as $t\to\infty$. The result was improved by Scheuer \cite{Sc16} later, and he showed that the solution is asymptotically round in the sense that the circumradius minus inradius of the evolving hypersurface decays to zero and that the flow becomes close to the expanding spheres.  

Huisken and Ilmanen \cite{HI01} developed the weak solution of IMCF in order to prove the Riemannian Penrose inequality. Let $\Omega \subset \mathbb{R}^{n}~ (n\geq2)$ be an open bounded subset with smooth boundary. The weak inverse mean curvature flow starting from $\Omega$ is described by a locally Lipschitz function $u: \mathbb{R}^{n} \to \mathbb{R}$ satisfying the degenerate elliptic equation
\begin{equation}\label{eq:WIMCF}
    \mathrm{div}\left( \dfrac{Du}{|Du|} \right) = |Du| \quad \text{in} \ \mathbb{R}^{n} \setminus \Omega,
\end{equation}
in certain variational sense, with $\{ u \leq 0 \} = \Omega$ and $u(\infty)=\infty$. The existence of the weak solution was proved in using the elliptic regularization method. An alternative approach was discovered by Moser \cite{M07}, and the idea is that  the weak solution $u$ of IMCF can also be interpreted as the limit when  $p\to 1^+$ of $u_p=(1-p)\log v_p$, where $v_p$ is the $p$-capacity potential of $\Omega$ and so $u_p$ satisfies
\begin{equation}\label{eq:p-WIMCF}
    \begin{cases}
        \Div(|Du_{p}|^{p-2}Du_{p}) = |Du_{p}|^p & \text{in} \ \mathbb{R}^{n} \setminus \overline{\Omega}, \\
        u_{p} = 0 & \text{on} \ \partial\Omega, \\
        u_{p} \to \infty & \text{as} \ |x| \to \infty.
    \end{cases}
\end{equation}
This idea was extended subsequently to IMCF in general Riemannian manifolds \cite{KN09,MRS22}. 

The asymptotical roundness of the weak solution $u$ in asymptotically flat manifolds was shown in \cite[Lemma 7.1]{HI01} using a weak blow down argument. This was extended by Benatti, Fogagnolo, and Mazzieri \cite{BFM24} to weak solution of IMCF in asymptotically conical manifolds. The result in the Euclidean space states as follows:
\begin{theorem}[{\cite[Theorem 1.2]{BFM24} and \cite[Lemma 7.1]{HI01}}]\label{thm:isotropic-asymptotic}
    The weak inverse mean curvature flow starting from an open bounded domain $\Omega\subset\mathbb{R}^{n}$ with smooth boundary satisfies
    \begin{equation}
        u(x) = (n-1)\log |x| -\log \frac{|\partial\Omega^*|}{|\mathbb{S}^{n-1}|}+ o(1) \quad  \text{as} \quad |x| \to \infty,
    \end{equation}
    where $\Omega^*$ denotes the strictly outward minimizing hull of $\Omega$. Moreover, the expanding sphere solution is the only solution to \eqref{eq:WIMCF} on $\mathbb{R}^n\backslash\{0\}$ with compact level sets.
\end{theorem}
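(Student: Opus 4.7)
The plan is to follow the weak blow-down strategy of Huisken--Ilmanen \cite{HI01}. For $\lambda > 0$ set $u_\lambda(x) := u(\lambda x) - (n-1)\log\lambda$. A direct computation shows that both equation \eqref{eq:WIMCF} and its variational formulation are invariant under this rescaling, so each $u_\lambda$ is a weak IMCF starting from $\Omega/\lambda$. The exponential area identity $|\partial\{u \le t\}| = e^{t}|\partial\Omega^{*}|$, which on weak IMCF follows from the fact that $|\partial\{u\le t\}|e^{-t}$ is conserved once one passes to strictly outward-minimizing hulls, transfers in the rescaled form $|\partial\{u_\lambda \le t\}| = e^{t}|\partial\Omega^{*}|$, so the rescaled family already carries the right normalization to see the constant $-\log(|\partial\Omega^{*}|/|\mathbb{S}^{n-1}|)$ in the limit.

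The next step is compactness on $\mathbb{R}^n \setminus \{0\}$. The local gradient estimate for weak IMCF, available from earlier sections by passing $p \to 1^+$ in the gradient bound for $p$-capacity potentials, yields a uniform Lipschitz bound for $\{u_\lambda\}$ on every annulus $A_{r,R} = \{r < |x| < R\}$ as soon as $\lambda$ is large enough that $\Omega/\lambda \subset B_r$. Arzel\`a--Ascoli then produces a subsequential local-uniform limit $u_\infty \in C^{0,1}_{\mathrm{loc}}(\mathbb{R}^n \setminus \{0\})$. Lower semicontinuity of BV perimeter and stability of the strictly outward-minimizing condition ensure that $u_\infty$ is a weak solution of \eqref{eq:WIMCF} on $\mathbb{R}^n\setminus\{0\}$ with compact level sets, and that the area identity $|\partial\{u_\infty \le t\}| = e^{t}|\partial\Omega^{*}|$ persists.

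It now suffices to prove the ``Moreover'' clause: any weak solution $v$ of \eqref{eq:WIMCF} on $\mathbb{R}^n \setminus \{0\}$ with compact level sets is an expanding round sphere. Combining the Huisken--Ilmanen comparison principle with the observation that $v \circ R$ is again such a solution for every $R \in SO(n)$, one concludes that each level set of $v$ is rotationally invariant about the origin, hence a ball $B_{r(t)}(0)$. The area identity then pins down $r(t) = \bigl(e^{t}|\partial\Omega^{*}|/(n\omega_n)\bigr)^{1/(n-1)}$, giving $v(x) = (n-1)\log|x| - \log(|\partial\Omega^{*}|/|\mathbb{S}^{n-1}|)$. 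Since the limit is uniquely determined, the full family $u_\lambda$ converges locally uniformly on $\mathbb{R}^n\setminus\{0\}$ to this explicit solution; substituting $y = \lambda x$ with $|x| = 1$ and letting $|y| = \lambda \to \infty$ produces the asserted expansion.

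The step I expect to be the main obstacle is the symmetrization used to identify $v$ with an expanding sphere. The Huisken--Ilmanen comparison principle presumes two sub/supersolutions with nested initial data, and applying it to $v$ and $v \circ R$ requires treating the origin as a degenerate ``initial set'', which forces a removable-singularity analysis at $0$. The exponential area identity together with strict positivity of $|Dv|$ on every annulus are the crucial inputs here, ruling out fattening and allowing the nested-translate argument to close; once rigidity is in hand, the reduction of the equation to the ODE $r'(t) = r(t)/(n-1)$ is routine.
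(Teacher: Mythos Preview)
Your overall blow-down strategy---rescale, extract a limit via the local gradient estimate and Arzel\`a--Ascoli, identify the limit as an explicit radial solution, then read off the constant from the exponential area growth---matches the paper's proof of its anisotropic analogue in Section~\ref{sec:main}. The divergence is in the rigidity step, where you propose to show that any weak solution $v$ on $\mathbb{R}^n\setminus\{0\}$ with compact level sets is radial by comparing $v$ with $v\circ R$ for $R\in SO(n)$. As you yourself flag, this is precisely where the argument does not close: the comparison principle (Lemma~\ref{lem-weakcom}) compares two weak flows with \emph{nested} initial sets, and neither $v$ nor $v\circ R$ has a well-defined initial set other than the degenerate point $\{0\}$. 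Using the two-sided $C^0$ bound $(n-1)\log|x|-C\le v\le (n-1)\log|x|+C$ one can nest a sublevel set of $v\circ R$ inside a sublevel set of $v$ at the cost of a fixed time shift, but comparison then only yields $|v-v\circ R|\le 2C$, not equality; the area identity and positivity of $|Dv|$ do not by themselves remove this slack.

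The paper avoids the singularity at the origin altogether by tracking the eccentricity $\theta_v(t)=R_v(t)/r_v(t)$, where $r_v(t)\le |x|\le R_v(t)$ is the thinnest annulus containing $\{v=t\}$. Comparing $v$ with the two explicit expanding-sphere solutions through $\partial B_{r_v(t)}$ and $\partial B_{R_v(t)}$---which \emph{do} have honest initial data---shows $\theta_v$ is non-increasing. A further blow-down $v_s(x)=v(sx)-(n-1)\log s$ as $s\to 0^+$ produces a limit $h$ with $\theta_h\equiv\sup_t\theta_v(t)$; if this supremum exceeded $1$, one perturbs the inner and outer spheres enclosing $\{h=0\}$ to smooth strictly mean-convex hypersurfaces, runs the \emph{smooth} IMCF from each, and uses the strong maximum principle for the smooth flow to force $\theta_h(t)<\theta_h(0)$ for $t>0$, contradicting constancy. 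This eccentricity-monotonicity argument is the substitute for your symmetrization, and it is where the genuine work lies. Your identification of the constant is essentially correct in spirit, though the paper does it by sandwiching $\{u_\lambda\le t\}$ between two nearby balls and using outward-minimality to compare perimeters, rather than by asserting that the exponential area law passes directly through the limit (perimeter is only lower semicontinuous under this convergence, so the one-line claim ``the area identity persists'' needs exactly that sandwiching to be justified).
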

The proofs in \cite{BFM24} and \cite{HI01} both rely on the gradient estimate $|Du| \leq C / d(x,0)$ (a consequence of \cite[Theorem 3.1]{HI01}) and then proceed to estimate the eccentricity of the level sets of $u$ for large $t$. The asymptotical roundness of the weak solution is used crucially in estimating the limit of the Hawking mass in order to prove the Penrose inequality. 

We remark that the inverse mean curvature flow in hyperbolic manifolds behaves quite differently. Neves \cite{Ne09} constructed an inverse mean curvature flow in the asymptotically hyperbolic space with positive  mass such that the shape of the limiting hypersurface is not round, and thus it's impossible to prove the hyperbolic Penrose inequailty using the method of IMCF. Later, Hung and Wang \cite{HW15} also showed that the limit shape of the inverse mean curvature flow in the hyperbolic space $\mathbb{H}^n$ is not necessarily round. 

In this paper, we extend the result in Theorem \ref{thm:isotropic-asymptotic} to the inverse anisotropic  mean curvature flow (IAMCF) in the Euclidean space $\mathbb{R}^n$. 

Recall that a function $F \in C^{\infty}(\mathbb{R}^{n} \setminus \{0\})$ is called a \textit{Minkowski norm}, if it defines a norm on $\mathbb{R}^{n}$ and satisfies the uniform ellipticity condition: $D^2(\frac{1}{2}F^2)$ is positive definite on $\mathbb{R}^{n} \setminus \{0\}$. Let $\Omega\subset \mathbb{R}^n$ be an open bounded domain with smooth boundary. The weak solution of IAMCF starting from $\Omega$ with respect to the anisotropic function $F$ is a locally Lipschitz function $u$ satisfying the following degenerate elliptic equation:
\begin{equation}\label{eq:WIAMCF}
	\begin{cases}
		\operatorname{div}(DF(D u)) = F(D u) & \text{in} \ \mathbb{R}^{n} \setminus \Omega, \\
		u = 0 & \text{on} \ \partial\Omega, \\
		u \to \infty & \text{as} \ |x| \to \infty,
	\end{cases}
\end{equation}
in certain variational sense. By using Moser's approximating scheme, Della Pietra, Gavitone and Xia \cite{DGX23} proved the existence of the weak solution of \eqref{eq:WIAMCF}. This was further extended by Cabezas-Rivas, Moll and Solera \cite{CMS24} to IAMCF for general anisotropic function $F$ which includes the crystalline case. The idea is to consider the following approximating equation involving the anisotropic $p$-Laplacian
\begin{equation}\label{eq:p-WIAMCF}
	\begin{cases}
		\Delta_{p,F}u_{p} = F^{p}(D u_{p}) & \text{in} \ \mathbb{R}^{n} \setminus \Omega, \\
		u_{p} = 0 & \text{on} \ \partial\Omega, \\
		u_{p} \to \infty & \text{as} \ |x| \to \infty,
	\end{cases}
\end{equation}
where 
\begin{equation*}
	\Delta_{p,F}u := \operatorname{div}\left(F^{p-1}(D u)DF(D u)\right).
\end{equation*}
By deriving the global gradient estimate of $u_p$, the weak solution of IAMCF is obtained as the limit of $u_p$ as $p\to 1^+$.

\begin{theorem}[\cites{CMS24,DGX23}] Let $\Omega\subset \mathbb{R}^n$ be an open bounded domain with smooth boundary. Then there exists a unique weak solution $u$ of IAMCF \eqref{eq:WIAMCF}.  If there exist two constants $0<r_{1}<r_{2}$ such that $\mathcal{W}_{r_{1}}\subset\Omega\subset\mathcal{W}_{r_{2}}$, then 
		\begin{equation}\label{C0 estimates}
			u_{r_{2}}\leq u \leq u_{r_{1}},
		\end{equation}
        where
        \begin{equation*}
			u_{r_{i}}=(n-1)\log\left(F^{\circ}\left(\frac{\cdot}{r_{i}}\right)\right), \ i=1,2.
        \end{equation*}
    Here $F^\circ$ denotes the dual norm of $F$ and $\mathcal{W}_r=\{x\in \mathbb{R}^n: ~F^\circ(x)<r\}$ denotes the Wulff shape of radius $r$. 
\end{theorem}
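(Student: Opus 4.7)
The plan is to carry out Moser's approximation scheme for \eqref{eq:WIAMCF} via the anisotropic $p$-Laplace problem \eqref{eq:p-WIAMCF} and pass to the limit $p\to 1^{+}$. First I would observe that the substitution $v_{p}:=\exp(u_{p}/(1-p))$ transforms \eqref{eq:p-WIAMCF} into the anisotropic $p$-capacity problem $\Delta_{p,F}v_{p}=0$ in $\mathbb{R}^{n}\setminus\Omega$, with $v_{p}=1$ on $\partial\Omega$ and $v_{p}\to 0$ at infinity; the cancellation uses only the $0$-homogeneity of $DF$ and Euler's relation $DF(\xi)\cdot\xi=F(\xi)$, and causes the nonlinearity $F^{p}(Du_{p})$ on the right-hand side to be absorbed exactly into the divergence. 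Existence, uniqueness, and $C^{1,\alpha}$ regularity of the anisotropic $p$-capacity potential of $\Omega$ for $1<p<n$ are classical by variational methods for the anisotropic $p$-Laplace operator, and they translate back to solvability of \eqref{eq:p-WIAMCF}.

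Next I would exhibit explicit Wulff-shape barriers. Using the Legendre-type dualities $F(DF^{\circ}(x))=1$, $\langle DF^{\circ}(x),x\rangle = F^{\circ}(x)$, and $DF(DF^{\circ}(x))=x/F^{\circ}(x)$, a direct computation shows that
\begin{equation*}
u_{p,r}(x)=(n-p)\log\bigl(F^{\circ}(x)/r\bigr)
\end{equation*}
solves \eqref{eq:p-WIAMCF} on $\mathbb{R}^{n}\setminus \mathcal{W}_{r}$. Under the hypothesis $\mathcal{W}_{r_{1}}\subset \Omega\subset \mathcal{W}_{r_{2}}$ we have $F^{\circ}(x)/r_{2}\leq 1\leq F^{\circ}(x)/r_{1}$ on $\partial\Omega$, hence $u_{p,r_{2}}\leq 0=u_{p}\leq u_{p,r_{1}}$ there; since all three functions satisfy the same equation on $\mathbb{R}^{n}\setminus\Omega$ and diverge to $+\infty$ at infinity, the comparison principle for $\Delta_{p,F}u=F^{p}(Du)$ (which, via the substitution above, reduces to the standard comparison principle for the anisotropic $p$-Laplacian) yields $u_{p,r_{2}}\leq u_{p}\leq u_{p,r_{1}}$ throughout $\mathbb{R}^{n}\setminus\Omega$.

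Finally, assuming the uniform-in-$p$ gradient estimate for $u_{p}$ (the content of \cites{DGX23,CMS24}), I would extract a subsequence $u_{p}\to u$ converging locally uniformly to a locally Lipschitz limit, pass the approximating equation to the variational formulation of \eqref{eq:WIAMCF}, and identify $u$ as a weak solution of IAMCF. Uniqueness in the variational sense follows in parallel with \cite{HI01}, since any two weak solutions minimize the same $J^{u}$-type functional on compactly supported perturbations, from which one deduces coincidence. Since $u_{p,r_{i}}\to (n-1)\log(F^{\circ}(\cdot/r_{i}))=u_{r_{i}}$ pointwise as $p\to 1^{+}$, the $p$-level comparison passes to the limit and gives \eqref{C0 estimates}.

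The hardest step is the gradient estimate for $u_{p}$ with constants bounded as $p\to 1^{+}$, since classical $p$-Laplace Lipschitz bounds degenerate in that limit; one must run a Bernstein-type computation on $\log F(Du_{p})$ that exploits the anisotropic structure of the right-hand side $F^{p}(Du_{p})$ carefully enough to keep the constants bounded—exactly the gradient estimate advertised in the abstract.
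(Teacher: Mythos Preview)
The paper does not prove this theorem; it is stated in the introduction as a result of \cite{DGX23,CMS24}, with only the one-sentence summary ``By deriving the global gradient estimate of $u_{p}$, the weak solution of IAMCF is obtained as the limit of $u_{p}$ as $p\to 1^{+}$.'' Your sketch is a faithful expansion of exactly this Moser approximation scheme, and the barrier computation $u_{p,r}=(n-p)\log(F^{\circ}(\cdot)/r)$ together with the comparison argument is correct, so there is nothing to compare beyond noting that your outline matches the cited references.

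One small point of confusion in your final paragraph: the gradient estimate needed for the existence theorem in \cite{DGX23,CMS24} is a \emph{global} estimate on $\mathbb{R}^{n}\setminus\overline{\Omega}$ (depending on $\Omega$), whereas the estimate ``advertised in the abstract'' and proved in this paper as Theorem~\ref{Sharp-C1} is a \emph{local} interior estimate on Wulff balls, with constant independent of the domain. The latter is what the authors need for the blow-down argument in Section~\ref{sec:main}, not for existence; for the statement you are proving, you should invoke the global estimate of \cite{DGX23,CMS24} rather than the paper's own local result.
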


\subsection{Local gradient estimate}
As the first result of our paper, we derive a local gradient estimate for the anisotropic $p$-harmonic functions.  Let
\begin{equation*}
    \mathcal{W}_R(x_0)=\{x\in\mathbb{R}^{n}:F^\circ(x-x_0)< R\}
\end{equation*}
denote the Wulff shape centered at $x_0\in\mathbb{R}^n$ with radius $R$ (see \S \ref{sec.2-1} for details). Assume that $v$ is a positive anisotropic $p$-harmonic function in the Wulff shape $\mathcal{W}_R(x_0)$, i.e. $\Delta_{p,F}v=0$ in $\mathcal{W}_R(x_0)$. It's direct to see that $u=(1-p)\log v$ satisfies 
\begin{equation}\label{s1.eq1}
    \Delta_{p,F}u=F^p(Du),\quad \text{in}~ \mathcal{W}_R(x_0).
\end{equation}
We prove the following local gradient estimate for $u$. 
\begin{theorem}\label{Sharp-C1}
	Let $1<p\leq 2$. Assume that $v$ is a positive anisotropic $p$-harmonic function on the Wulff shape $\mathcal{W}_R(x_0)$. Then $u=(1-p)\log v$ satisfies
	\begin{equation}
		\label{equ:sharpgradient}
		\max\limits_{\mathcal{W}_{R/2}(x_0)} F(Du) \leq \dfrac{C(n,F)}{R},
	\end{equation}
	where $C=C(n,F)$ is uniform for $1<p\leq 2$. 
	
	%Then along the weak inverse anisotropic mean curvature flow, we have
	%\begin{equation}
	%	\max\limits_{L_{R/2}(x_0)} F(Du) \leqslant \dfrac{C(C_1,n,F)}{R}.
	%\end{equation}
\end{theorem}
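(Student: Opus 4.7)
The plan is to apply the Bernstein technique to the auxiliary function $w=F^2(Du)$, using as elliptic operator the linearization of the anisotropic $p$-Laplacian at $u$. Specifically, I would define
\[
\mathcal{L}\phi := \partial_i\bigl(a^{il}(Du)\,\partial_l\phi\bigr),\qquad a^{il}(\xi):=F^{p-2}(\xi)\bigl(F(\xi)F_{il}(\xi)+(p-1)F_i(\xi)F_l(\xi)\bigr),
\]
which is elliptic on $\{Du\neq 0\}$. Differentiating the PDE $\Delta_{p,F}u=F^p(Du)$ in $\partial_k$ gives a divergence-form equation for $u_k$; multiplying this equation by $F(Du)F_k(Du)$, summing in $k$, and using the identity $\partial_l(F^2(Du)/2)=F(Du)F_k(Du)u_{kl}$, one arrives at an anisotropic Bochner-type identity whose main positive term is a quadratic form $H_{km}(Du)\,a^{il}(Du)\,u_{mi}u_{lk}$, with $H=F^2/2$ uniformly positive definite by the Minkowski-norm assumption, while the lower-order terms involve $F^p(Du)F_k(Du)F_l(Du)u_{kl}$ coming from differentiating the right-hand side of the PDE.

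Next I would introduce a Wulff-type cutoff $\eta(x)=\phi\bigl(F^\circ(x-x_0)/R\bigr)$ for a fixed smooth profile $\phi:[0,1]\to[0,1]$ equal to one on $[0,1/2]$ and vanishing at~$1$. Since $F^\circ$ is smooth away from the origin with derivatives controlled in terms of $F$, one obtains $|D\eta|\le C(F)/R$ and $|D^2\eta|\le C(F)/R^2$, hence $\mathcal{L}\eta$ and $\mathcal{L}\eta^2$ admit analogous bounds with coefficients depending only on $F$. I would then consider the function $G:=\eta^2 w$ on the closed Wulff shape $\overline{\mathcal{W}_R(x_0)}$; since $G$ vanishes on the boundary, either $G\equiv 0$ (and the conclusion is trivial) or $G$ attains a positive maximum at an interior point $x^\ast$.

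At $x^\ast$, the first-order condition $DG(x^\ast)=0$ expresses $\partial_l w$ in terms of $w$ and $\partial_l\eta$, and the second-order condition $\mathcal{L}G(x^\ast)\le 0$, combined with the Bochner identity and the PDE itself, reduces to a polynomial inequality for $w(x^\ast)\eta^2(x^\ast)$ with coefficients depending only on $n,F$, and $R$. A Cauchy--Schwarz absorption with a small parameter sends every full-Hessian term into the positive definite quadratic form, and, after standard rearrangement, yields the schematic bound $w(x^\ast)\eta^2(x^\ast)\le C(n,F)/R^2$; since $\eta\equiv 1$ on $\mathcal{W}_{R/2}(x_0)$, this is exactly \eqref{equ:sharpgradient}.

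The principal obstacle is ensuring that the final constant remains bounded as $p\to 1^+$, since $\mathcal{L}$ degenerates in the $Du$-direction with weight $(p-1)F^{p-1}(Du)$. The required uniformity hinges on careful homogeneity bookkeeping: the overall prefactor $F^{p-2}$ appears consistently across every term of the Bochner identity and cancels in any scale-invariant bound, the explicit $(p-1)$ factors in $a^{il}$ pair with $(p-2)$-type corrections arising from differentiating $F^{p-1}$ to give expressions uniformly bounded for $p\in(1,2]$, and the $p$ produced by differentiating $F^p(Du)$ is itself at most $2$. Once these cancellations are tracked, every constant depends only on $n$ and on structural constants of $F$, giving the $p$-uniform estimate that underlies the passage to the weak IAMCF in the limit $p\to 1^+$.
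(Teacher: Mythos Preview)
Your overall strategy---a Kotschwar--Ni/Bernstein argument with a Wulff-type cutoff and maximum principle---is the same as the paper's. However, there is a genuine gap: you never address the \emph{anisotropic} term that distinguishes this problem from the isotropic one. When you differentiate the PDE (or, equivalently, when you compute $\mathcal{L}(\eta^2 w)$ and expand the divergence of the coefficient matrix), a term of the form $G_{kij}(Du)\,u_{jk}\,\eta_i$ appears, where $G=\tfrac12 F^2$ and $G_{kij}$ denotes a third derivative in the $\xi$-variable. In the Euclidean case $G_{kij}\equiv 0$ and the term vanishes; here it does not, and it is linear in $D^2u$ with a coefficient you have no a priori control over. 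The paper's key new ingredient is the homogeneity-based structural inequality
\[
F(\xi)\,D^3G|_\xi(\alpha,\beta,\gamma)\;\le\;C_1(F)\,\bigl(D^2G|_\xi(\alpha,\alpha)\,D^2G|_\xi(\beta,\beta)\,D^2G|_\xi(\gamma,\gamma)\bigr)^{1/2},
\]
which allows one to absorb this third-derivative term into the good quadratic form $G_{ij}G_{kl}u_{ik}u_{jl}$ at the cost of a factor $1-\sqrt{n}C_1^2/(8\varepsilon)$; choosing $\varepsilon$ of order $C_1^2$ keeps this factor uniformly positive while the penalty lands only in the $O(R^{-2})$ cutoff terms. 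Your ``Cauchy--Schwarz absorption with a small parameter'' does not cover this, and without it the computation does not close.

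Your discussion of $p$-uniformity is also too schematic. The good quadratic form $H_{km}a^{il}u_{mi}u_{lk}$ by itself is \emph{not} uniformly coercive in the $Du$-direction as $p\to 1$ (the weight there is $(p-1)F^{p-2}$), so ``absorbing into the positive term'' would produce a constant blowing up like $(p-1)^{-1}$. The paper does not rely on coercivity of $a^{il}$; instead it works in a $D^2G$-orthonormal frame, uses the first-order condition at the maximum to express $u_{11}$ in terms of $\eta$, and combines $\sum_{j\ge 2}u_{jj}^2\ge\tfrac{1}{n-1}(\sum_{j\ge 2}u_{jj})^2$ with the PDE (which gives $\sum_j u_{jj}=2f+O(|D\eta|/\eta)$) to extract a term $\tfrac{4}{n-1}f^2$ that drives the estimate. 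The residual terms carry explicit factors of $(p-2)$ and are checked to have the favorable sign precisely for $1<p\le 2$. This sign analysis, not a generic cancellation of $(p-1)$ against $(p-2)$, is what makes the constant uniform.
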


Our proof of \eqref{equ:sharpgradient} uses the technique of Kotschwar and Ni \cite{KN09} for the local gradient estimate for positive $p$-harmonic functions on general Riemannian manifolds with sectional curvature bounded from below, which is inspired by the gradient estimate of Cheng-Yau \cite{CY75} for positive harmonic functions. Let $f=\frac{1}{2}F^2(Du)$. We first derive a Bochner type formula for $\mathscr{L}f$, where $\mathscr{L}$ is the linearized operator of the nonlinear equation \eqref{s1.eq1}. To estimate the local upper bound of $f$, we multiply $f$ by a suitable cut-off function $\eta\in C_c^\infty(\mathcal{W}_R(x_0))$, derive a differential inequality satisfied by $Q=f\eta$ at its interior maximum point and then apply the maximum principle. The computation is much involved, and a difficulty arises in handling the term containing the third derivative of the Minkowski norm squared $G(\xi):=\frac{1}{2}F^2(\xi)$, which vanishes in the isotropic case. To overcome this, we employ a key property: there exists a constant $C_1 = C_1(F) > 0$ depending only on $F$ such that for all vectors $\alpha, \beta, \gamma \in \mathbb{R}^n$, the following inequality holds:
\begin{equation*}%\label{small-third-der}
F D^3 G|_{\xi}(\alpha, \beta, \gamma) \leq C_1 \left( D^2 G|_{\xi}(\alpha, \alpha) \cdot D^2G|_{\xi}(\beta, \beta) \cdot D^2G|_{\xi}(\gamma, \gamma) \right)^{1/2}.
\end{equation*}
This inequality is valid because both sides are homogeneous of degree zero and $D^2G$ is positive definite. Using this estimate, we are able to control the term involving the third derivative of $G$. We note that a similar property, under certain smallness constraints on $C_1 > 0$, has also been used in the interior gradient estimate for anisotropic mean curvature flow in \cite{CL07}.

A remarkable feature of Kotschwar-Ni's gradient estimate \cite{KN09} for positive $p$-harmonic functions is that the constant in their estimate does not blow up when $p \to 1$, which effectively leads to the gradient estimate for the weak solution of IMCF. Recently, Mari, Rigoli, and Setti \cite{MRS22} extended the gradient estimate of Kotschwar and Ni \cite{KN09} to the case where $\mathrm{Ric}\geq -(n-1)h(r)$, with $h(r)$ being a radial function satisfying certain conditions. Our gradient estimate \eqref{equ:sharpgradient} for positive anisotropic $p$-harmonic functions is also uniform for $p$, and thus we obtain the following corollary. 
\begin{corollary}\label{s1.cor1}
	Let $\Omega \subset \mathbb{R}^n$ be an open bounded domain with smooth boundary. Assume that the origin lies in the interior of $\Omega$.  Then there exists a positive constant $C = C(n, F, \Omega)>0$ such that the weak solution $u$ of the IAMCF \eqref{eq:WIAMCF} starting from $\Omega$ satisfies
	\begin{equation}\label{s1.eq:sharpgrad}
		F(Du)(x) \leq \dfrac{C}{F^\circ(x)}
	\end{equation} 
	for almost every $x \in \mathbb{R}^n \setminus \Omega$.
\end{corollary}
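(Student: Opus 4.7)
The estimate is a direct consequence of Theorem \ref{Sharp-C1} combined with the approximation scheme of \cite{CMS24, DGX23}. Recall that the weak solution $u$ is obtained as the locally uniform limit of $u_p = (1-p)\log v_p$ as $p\to 1^+$, where $v_p$ is the positive anisotropic $p$-harmonic capacity potential of $\Omega$; in particular, $v_p$ is anisotropic $p$-harmonic on every Wulff shape contained in $\mathbb{R}^n\setminus\overline{\Omega}$. Since the constant in Theorem \ref{Sharp-C1} is uniform in $p\in(1,2]$, applying it on a suitable Wulff shape centred at $x$ will yield a $p$-uniform bound on $F(Du_p)(x)$ which can then be passed to the limit.

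\noindent\textbf{Geometric step.} Fix $r_1,r_2>0$ with $\mathcal{W}_{r_1}\subset\Omega\subset\mathcal{W}_{r_2}$; the lower inclusion is available precisely because the origin lies in the interior of $\Omega$. For $x$ with $F^\circ(x)>2r_2$, I set $R:=F^\circ(x)-r_2\geq F^\circ(x)/2$. By the triangle inequality for the dual norm $F^\circ$, every $y\in\mathcal{W}_R(x)$ satisfies $F^\circ(y)\geq F^\circ(x)-F^\circ(x-y)>r_2$, and hence $\mathcal{W}_R(x)\subset\mathbb{R}^n\setminus\overline{\mathcal{W}_{r_2}}\subset\mathbb{R}^n\setminus\overline{\Omega}$. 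Applying Theorem \ref{Sharp-C1} to $v_p$ on $\mathcal{W}_R(x)$ then gives
\begin{equation*}
F(Du_p)(x)\;\leq\;\frac{C(n,F)}{R}\;\leq\;\frac{2C(n,F)}{F^\circ(x)},
\end{equation*}
uniformly in $p\in(1,2]$.

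\noindent\textbf{Passage to the limit and near-boundary region.} The uniform Lipschitz bound on $u_p$ on the region $\{F^\circ(x)>2r_2\}$, combined with the standard $C^{1,\alpha}_{loc}$ interior regularity for anisotropic $p$-Laplace equations, allows extracting a subsequence along which $Du_p\to Du$ almost everywhere, so the bound above persists in the limit and holds for $u$ a.e. on $\{F^\circ(x)>2r_2\}$. On the compact annular region $\{r_1\leq F^\circ(x)\leq 2r_2\}\setminus\Omega$, the weak solution $u$ is Lipschitz by \cite{CMS24,DGX23}, so $F(Du)\leq M$ almost everywhere there; combined with $F^\circ(x)\geq r_1$ this yields $F(Du)(x)\leq (2Mr_2/r_1)(F^\circ(x))^{-1}$. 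Taking the larger of the two constants completes the proof. The main point—and the whole reason for developing Theorem \ref{Sharp-C1} with a $p$-uniform constant—is that the gradient bound on the $p$-approximants survives the degenerate limit $p\to 1^+$; without this uniformity the argument would collapse, so extracting that uniformity in Theorem \ref{Sharp-C1} is really where the work lies.
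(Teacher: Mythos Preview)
Your proposal is correct and follows essentially the same approach as the paper: both apply Theorem \ref{Sharp-C1} on a Wulff shape $\mathcal{W}_R(x)\subset\mathbb{R}^n\setminus\overline{\Omega}$ with $R$ comparable to $F^\circ(x)$, use the $p$-uniformity of the constant to pass to the limit, and handle the bounded near-boundary region via the Lipschitz bound on $u$ together with the lower bound on $F^\circ$ coming from $0\in\Omega$. Two minor remarks: the constant on the compact region should simply be $2Mr_2$ (the factor $1/r_1$ is unnecessary and could make the constant too small when $r_1>1$), and the passage $Du_p\to Du$ a.e.\ does not actually require uniform-in-$p$ $C^{1,\alpha}$ bounds---it follows directly from the locally uniform convergence $u_p\to u$ and the pointwise bound $F(Du_p)\leq C/F^\circ$, since these force the same Lipschitz bound (with respect to $F^\circ$) on $u$ and hence on $F(Du)$ a.e.\ by Rademacher.
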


We remark that, in addition to the maximum principle method used by Cheng-Yau \cite{CY75} and Kotschwar-Ni \cite{KN09},  the Moser iteration technique is also a powerful tool in deriving gradient estimates. Wang and Zhang \cite{WZ11} employed the Moser iteration to derive a local gradient estimate for positive $p$-harmonic functions on manifolds $M$ satisfying $\mathrm{Ric} \geq -(n-1)\kappa^2$; however, the constant in their estimate becomes unbounded as $p \to 1$. This was further extended by Xia \cite{XQ22} to local gradient estimates for Finsler $p$-harmonic functions on Finsler manifolds with weighted Ricci curvature bounded below, where the constant again blows up as $p \to 1^+$. As a special case of Xia's result, the following local gradient estimate holds for anisotropic $p$-harmonic functions on $\mathbb{R}^n$:
\begin{equation}
		\label{equ:sharpgradient-xia}
		\max\limits_{\mathcal{W}_{R/2}(x_0)} F(Du) \leq \dfrac{C(n,p,F)}{R},
	\end{equation}
However, compared with our estimate in \eqref{equ:sharpgradient}, the constant $C(n,p,F)$ in \eqref{equ:sharpgradient-xia} becomes unbounded as $p\to 1$. For this reason, in Theorem \ref{Sharp-C1} we only focus on the range $1<p\leq 2$, with the main purpose being the application to the weak IAMCF as $p\to 1$.   

%There are three primary approaches to proving Cheng–Yau type estimates for $p$-harmonic functions: The first uses the Bochner formula to derive derivative equations and then applies Nash–Moser iteration \cite{L83,WZ11,he2023nash}; The second employs De Giorgi iteration to obtain bounds \cite{dibenedetto1983c1+}; The third applies maximum principles to analyze the resulting equations \cite{KN09,MRS22}. These methods can be extended to anisotropic $p$-harmonic functions. It is important to note that while the first two approaches yield Cheng–Yau type estimates, they lead to non-sharp bounds. In contrast, the third method—which we adopt in our proof—achieves sharp estimates.

\subsection{Asymptotical behaviour of weak IAMCF}
Utilizing the gradient estimate \eqref{s1.eq:sharpgrad} and adapting the methods of \cite[Lemma 7.1]{HI01} and \cite[Theorem 1.2]{BFM24}, we prove the following anisotropic counterpart of Theorem \ref{thm:isotropic-asymptotic}:

\begin{theorem}\label{thm:anisotropic-asymptotic}
	Let $u$ be the weak solution of IAMCF \eqref{eq:WIAMCF} starting from an open bounded  set $\Omega\subset\mathbb{R}^n$ with smooth boundary $\partial\Omega$. Then 
	\begin{equation}\label{s1.ulim}
		u(x) = (n-1) \log F^{\circ}(x)+\log\left( \dfrac{|\partial\mathcal{W}|_F}{|\partial\Omega^*|_F} \right) + o(1) \quad \text{as} \ F^\circ(x) \to \infty,
	\end{equation}
    where $\Omega^*$ is the strictly outward $F$-minimizing hull of $\Omega$. Moreover, the expanding Wulff shape is the only solution to \eqref{eq:WIAMCF} on $\mathbb{R}^n\backslash\{0\}$ with compact level sets.
\end{theorem}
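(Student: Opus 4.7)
The plan is to adapt the weak blow-down argument of Huisken--Ilmanen \cite{HI01} and Benatti--Fogagnolo--Mazzieri \cite{BFM24} to the anisotropic setting, using the local gradient estimate of Corollary \ref{s1.cor1} as the key uniform compactness tool. For $\lambda \geq 1$, define the rescaled functions
\begin{equation*}
    u_\lambda(x) := u(\lambda x) - (n-1)\log \lambda, \qquad x \in \mathbb{R}^n \setminus \tfrac{1}{\lambda}\Omega.
\end{equation*}
The equation $\operatorname{div}(DF(Du))=F(Du)$ is invariant under $(x,u)\mapsto(\lambda x, u+c)$, so each $u_\lambda$ is a weak solution of IAMCF starting from $\tfrac{1}{\lambda}\Omega$. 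Corollary \ref{s1.cor1}, rescaled, delivers a uniform bound $F(Du_\lambda)(x) \leq C/F^\circ(x)$, while the $C^0$-sandwich \eqref{C0 estimates} gives a uniform $L^\infty_{\mathrm{loc}}$ bound on $\mathbb{R}^n\setminus\{0\}$. Arzel\`a--Ascoli therefore extracts a subsequence $u_{\lambda_k}$ converging locally uniformly on $\mathbb{R}^n\setminus\{0\}$ to a Lipschitz limit $u_\infty$ satisfying the same gradient bound. Standard closedness of the weak IAMCF class under such limits, via the variational formulation of \cites{CMS24,DGX23} (the anisotropic analogue of \cite[Theorem 2.1]{HI01}), shows that $u_\infty$ is a weak solution of \eqref{eq:WIAMCF} on $\mathbb{R}^n\setminus\{0\}$; \eqref{C0 estimates} forces $u_\infty\to -\infty$ at the origin and $+\infty$ at infinity, so every level set of $u_\infty$ is compact.

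The next step is the ``moreover'' statement: any weak solution $v$ on $\mathbb{R}^n\setminus\{0\}$ with compact level sets must be an expanding Wulff shape $v(x)=(n-1)\log F^\circ(x)+c$. By interior regularity the level sets $\{v=t\}$ are smooth $F$-minimizing hypersurfaces evolving classically by IAMCF with $|\{v=t\}|_F=e^t |\{v=0\}|_F$. Combining this with the anisotropic Minkowski/Wulff isoperimetric inequality and its rigidity case (equality only on Wulff shapes), or equivalently sliding the explicit expanding Wulff shape solution against $v$ via an anisotropic comparison principle, forces each level set to be a concentric Wulff shape, yielding uniqueness. Applied to $u_\infty$ this gives $u_\infty(x)=(n-1)\log F^\circ(x)+c$ for some $c\in\mathbb{R}$.

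It remains to identify $c$. I would pass to the limit in the exponential $F$-perimeter growth $|\partial\{u\leq t\}|_F=e^t|\partial\Omega^*|_F$ (for $t\geq 0$, the anisotropic analogue of \cite[(1.15)]{HI01} supplied by \cites{CMS24,DGX23}) under the blow-down, obtaining $|\partial\{u_\infty\leq t\}|_F=e^t|\partial\Omega^*|_F$. On the other hand, $\{u_\infty\leq t\}=\mathcal{W}_{\exp((t-c)/(n-1))}$ has $F$-perimeter $e^{t-c}|\partial\mathcal{W}|_F$; matching gives $c=\log\bigl(|\partial\mathcal{W}|_F/|\partial\Omega^*|_F\bigr)$. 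Since every subsequential limit coincides with this unique $u_\infty$, the full family $u_\lambda$ converges locally uniformly on $\mathbb{R}^n\setminus\{0\}$, which is exactly the expansion \eqref{s1.ulim}. I expect the main obstacle to be the rigidity/uniqueness step on $\mathbb{R}^n\setminus\{0\}$: transferring the Huisken--Ilmanen characterization of expanding homothetic solutions to the anisotropic regime requires the equality case of an anisotropic Minkowski-type inequality together with a careful treatment of the removable singularity at the origin, both genuinely more delicate than in the isotropic case. A secondary technical point is verifying closedness of weak anisotropic IAMCF under the blow-down, where one must check lower semi-continuity of the anisotropic Huisken--Ilmanen functional and the admissibility of competitors on the unbounded domain $\mathbb{R}^n\setminus\{0\}$.
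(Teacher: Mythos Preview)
Your overall architecture---blow-down $u_\lambda$, compactness via Corollary~\ref{s1.cor1} and \eqref{C0 estimates}, uniqueness of the limit on $\mathbb{R}^n\setminus\{0\}$, then identification of the constant via exponential $F$-perimeter growth---matches the paper exactly, including Step~3.

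The genuine gap is in your uniqueness argument. Your primary route assumes that ``by interior regularity the level sets $\{v=t\}$ are smooth \ldots\ evolving classically by IAMCF''. This is not available: even in the isotropic case weak IMCF level sets are only $C^{1,\alpha}$ (in low dimensions) and need not evolve classically, and for weak IAMCF smoothness is precisely the open question the paper raises after Theorem~\ref{thm:anisotropic-asymptotic}. Consequently you cannot invoke the equality case of an anisotropic Minkowski inequality on these level sets. Your fallback ``sliding'' suggestion points in the right direction but is not an argument as stated.

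The paper circumvents regularity of the weak solution entirely. It introduces the eccentricity $\theta_v(t)=R_v(t)/r_v(t)$, where $\{r_v(t)\le F^\circ\le R_v(t)\}$ is the smallest $F$-annulus containing $\{v=t\}$. Weak comparison with expanding Wulff shapes makes $\theta_v$ nonincreasing in $t$. A \emph{second} blow-down, now sending $s\to 0^+$ in $v_s(x)=v(sx)-(n-1)\log s$, produces a limit $h$ with $\theta_h(t)\equiv\sup_t\theta_v(t)$ constant. If $\theta_h>1$, perturb the inner and outer touching Wulff shapes of $\{h\le 0\}$ slightly to obtain smooth star-shaped strictly $F$-mean-convex domains $U^\pm$; the \emph{smooth} IAMCF from $\partial U^\pm$ exists for all time by \cite{Xia17} and coincides with the weak flow (Lemma~\ref{lem-smweak}). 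The strong maximum principle for these smooth barriers then forces $\theta_h(t)<\theta_h(0)$ for $t>0$, contradicting constancy. Hence $\theta_h\equiv 1$, so $\theta_v\equiv 1$, and $v=(n-1)\log F^\circ+\gamma$. The point is that smoothness is only needed for the auxiliary barrier flows, never for the weak solution itself; this is what your proposal is missing.
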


\begin{remark}
The asymptotical behavior of the solution $u_p$ to the equation \eqref{eq:p-WIAMCF} was studied  by Xia-Yin \cite{XY22}. However, since $u_p$ converges to the weak IAMCF $u$ only locally uniformly as $p\to 1^+$, we cannot directly deduce the asymptotical behavior \eqref{s1.ulim} of $u$ by simply letting $p\to 1^+$ in the asymptotical behavior of $u_p$, due to the inherent non-commutativity of limits. Despite this, it is shown a posteriori that the two are indeed consistent.    
\end{remark}

For weak IMCF \eqref{eq:WIMCF}, Huisken-Ilmanen \cite{HI08} proved that the level set $\partial \{u<t\}$ becomes smooth after a sufficiently large time. This combined with Theorem \ref{thm:isotropic-asymptotic} implies the asymptotical behavior of higher order derivatives of $u$. It would be natural to ask the following question.

\begin{question}
Whether the weak IAMCF \eqref{eq:WIAMCF} becomes smooth after a sufficiently large time?
\end{question}

% Owing to the smoothness and strict convexity of $F$, and following the regularity arguments in Theorem \ref{thm:isotropic-regularity}, we obtain:

% \begin{theorem}\label{thm:anisotropic-regularity}
%     There exists $t_{0} \geq 0$ such that for all $t > t_{0}$, the level sets $\Sigma_{t} = \{ v = t \}$ of IAMCF are smooth.
% \end{theorem}

The paper is organized as follows. In Section \ref{sec:prelim}, we recall some preliminaries for anisotropic geometry and weak inverse anisotropic mean curvature flow. In Section \ref{sec:gradientest}, we prove the local gradient estimate for positive anisotropic $p$-harmonic functions, as well as for weak solutions of IAMCF. In Section \ref{sec:main}, we complete the proof of Theorem \ref{thm:anisotropic-asymptotic}, the asymptotic behaviour of the weak IAMCF.

\begin{ack}
The research is supported by National Key Research and Development Program of China 2021YFA1001800, and National Natural Science Foundation of China No.12531002. 
\end{ack}

\section{Preliminaries}\label{sec:prelim}
% 预备知识
This section provides the necessary preliminaries for anisotropic geometry and weak solution of the inverse anisotropic mean curvature flow.

\subsection{Minkowski norm and Wulff shape}\label{sec.2-1}
%We begin by recalling the definitions of the Minkowski norm and the Wulff shape.

A function $F\in C^{\infty}\left(\mathbb{R}^{n}\backslash\{0\}\right)$ is called a \textit{Minkowski norm} if 
\begin{enumerate}
    \item $F$ is a norm in $\mathbb{R}^{n}$, i.e., $F$ is convex, even, $1$-homogeneous with $F(\xi)\geq 0$ for all $\xi\in \mathbb{R}^n$, and $F(\xi)=0$ if and only if $\xi=0$;
    \item $F$ satisfies a uniformly elliptic condition: $D^2\left( \frac{1}{2}F^2 \right)$ is positive definite in $\mathbb{R}^{n}\backslash \{0\}$. 
\end{enumerate} 
The dual norm of $F$ is defined as
\begin{equation}
    F^\circ(x):= \sup\limits_{\xi\neq 0} \dfrac{\langle x,\xi \rangle}{F(\xi)},\ \ x\in\mathbb{R}^{n},
    \label{F0}
\end{equation}
which is also a Minkowski norm. This definition implies the anisotropic Cauchy-Schwarz inequality:
\begin{equation}
    \langle x, \xi \rangle \leq F^\circ(x) F(\xi), \quad \forall\ x, \xi \in \mathbb{R}^{n},
    \label{eq:anisocauchy}
\end{equation}
with equality holding if and only if $\xi = F(\xi) DF^\circ(x)$ and $x = F^\circ(x) DF(\xi)$. Here $D$ denotes the gradient operator in $\mathbb{R}^{n}$.

The following properties relate $F$ and its dual $F^\circ$ (see e.g. \cite{Xia13,CS09}):
\begin{lemma}
    For any $x,\xi\in\mathbb{R}^{n}\backslash \{0\}$, we have
    \begin{align}
        \langle DF^\circ(x),x \rangle=F^\circ(x),&\ \ \langle DF(\xi),\xi \rangle=F(\xi). \label{fprop1}\\
        F\left( DF^\circ (x)\right)=1,&\ \ F^\circ \left( DF(\xi) \right) = 1, \label{fprop2}\\
       F^\circ(x) DF\left( DF^\circ(x) \right) = x,&\ \  F(\xi) DF^\circ \left( DF(\xi) \right) = \xi.\label{fprop3}
    \end{align}
    
\end{lemma}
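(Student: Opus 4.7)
The plan is to derive the three pairs of identities in sequence, using only the $1$-homogeneity of $F$ and $F^\circ$, the convexity of these norms in the form of a supporting hyperplane inequality, and the equality case of the anisotropic Cauchy--Schwarz inequality \eqref{eq:anisocauchy}.

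First, I would establish \eqref{fprop1} by Euler's identity. Since $F$ is $1$-homogeneous, $F(t\xi)=tF(\xi)$ for every $t>0$; differentiating in $t$ at $t=1$ yields $\langle DF(\xi),\xi\rangle = F(\xi)$. The same argument applied to the $1$-homogeneous function $F^\circ$ gives $\langle DF^\circ(x),x\rangle = F^\circ(x)$.

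Next, for \eqref{fprop2}, I fix $\xi\neq 0$ and prove the two inequalities $F^\circ(DF(\xi))\geq 1$ and $F^\circ(DF(\xi))\leq 1$. The first follows by applying \eqref{eq:anisocauchy} with $x$ replaced by $DF(\xi)$ and invoking \eqref{fprop1}: $F(\xi)=\langle DF(\xi),\xi\rangle\leq F^\circ(DF(\xi))F(\xi)$. For the second, I would use the supporting hyperplane inequality for the convex function $F$ at $\xi$, namely $F(\eta)\geq F(\xi)+\langle DF(\xi),\eta-\xi\rangle$ for every $\eta\in\mathbb{R}^n$; combined with \eqref{fprop1} this simplifies to $\langle DF(\xi),\eta\rangle\leq F(\eta)$ for all $\eta$, which by the definition \eqref{F0} of $F^\circ$ is exactly $F^\circ(DF(\xi))\leq 1$. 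The identity $F(DF^\circ(x))=1$ is proved by the identical argument with the roles of $F$ and $F^\circ$ interchanged, using $F^{\circ\circ}=F$, which is standard for Minkowski norms.

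Finally, \eqref{fprop3} is obtained by observing that the pair $(x,DF^\circ(x))$ saturates the anisotropic Cauchy--Schwarz inequality: by \eqref{fprop1}, $\langle x,DF^\circ(x)\rangle=F^\circ(x)$, and by \eqref{fprop2}, $F(DF^\circ(x))=1$, so $\langle x,DF^\circ(x)\rangle = F^\circ(x)\,F(DF^\circ(x))$. The stated equality condition in \eqref{eq:anisocauchy} then forces $x=F^\circ(x)\,DF(DF^\circ(x))$. The companion identity $\xi=F(\xi)\,DF^\circ(DF(\xi))$ follows the same way from the pair $(DF(\xi),\xi)$. The entire proof is routine; the only subtlety is the use of the supporting hyperplane inequality in \eqref{fprop2}, since a direct attempt to bound $F^\circ(DF(\xi))\leq 1$ from the sup formula \eqref{F0} would be circular.
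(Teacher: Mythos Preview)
Your proof is correct. The paper does not give its own argument for this lemma; it merely cites the references \cite{Xia13,CS09}, so there is nothing in the text to compare against directly.

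One remark on your derivation of \eqref{fprop3}: you invoke the equality case of \eqref{eq:anisocauchy}, which the paper states (without proof) just before the lemma. This is legitimate in the paper's logical order, but since in some presentations that equality characterization is itself deduced \emph{from} \eqref{fprop3}, it is worth recording that no circularity arises here. The ``only if'' direction of the equality case follows from a Lagrange-multiplier argument on $\{F(\eta)=1\}$: if $\xi$ maximizes $\eta\mapsto\langle x,\eta\rangle$ there, then $x=\lambda\,DF(\xi)$ for some $\lambda$, and pairing with $\xi$ together with \eqref{fprop1} forces $\lambda=F^\circ(x)$. This uses only smoothness of $F$ and Euler's identity, not \eqref{fprop2} or \eqref{fprop3}. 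With that understood, your route through Euler for \eqref{fprop1}, the supporting-hyperplane inequality for \eqref{fprop2}, and the equality case for \eqref{fprop3} is clean and self-contained.
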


%Let $\mathcal{W}\subset \mathbb{R}^{n}$ be a closed smooth strictly convex hypersurface enclosing the origin, with support function $F\in C^{\infty}(\mathbb{S}^{n-1})$. This $F\in C^{\infty}(\mathbb{S}^{n-1})$ can be extended to be a 1-homogeneous function $F\in C^{\infty}\left( \mathbb{R}^{n}\backslash\{0\} \right)$ by
%\begin{equation*}
%    F(x)=|x|F\left( \dfrac{x}{|x|} \right),\ \ x\in\mathbb{R}^{n}\backslash\{0\}\ \mbox{and}\ F(0)=0.
%\end{equation*}

Given a Minkowski norm $F$ in $\mathbb{R}^{n}$, the associated Wulff shape is defined as  
\begin{equation*}
    \mathcal{W}:=\left\lbrace x\in\mathbb{R}^{n}:F^\circ(x)<1 \right\rbrace.
\end{equation*}
For $x_0\in \mathbb{R}^{n}$ and $r>0$, we denote by $\mathcal{W}_r(x_0)=r\mathcal{W}+x_0$ the scaled and translated Wulff shape, which satisfies
\begin{equation*}
    \mathcal{W}_r(x_0)=\left\lbrace x\in\mathbb{R}^{n}:F^\circ(x-x_0)<r \right\rbrace.
\end{equation*}
When $F$ is the Euclidean norm, $\mathcal{W}_r(x_0)$ coincides with the Euclidean ball $\mathbb{B}_r(x_0)$ of radius $r$ centered at $x_0$. 

Let $\Omega\subset \mathbb{R}^n$ be an open bounded domain with smooth boundary $\partial\Omega$. The anisotropic perimeter $|\partial\Omega|_F$ of $\Omega$ is defined as 
\begin{equation*}
    |\partial\Omega|_F=\int_{\partial\Omega}F(\nu)d\mu,
\end{equation*}
where $\nu$ is the unit outward normal of $\partial\Omega$. 

\subsection{Weak inverse anisotropic mean curvature flow}

Given an open bounded domain $\Omega\subset\mathbb{R}^n$ with smooth mean-convex boundary $\partial\Omega$, the classical inverse anisotropic mean curvature flow (IAMCF) starting from $\partial\Omega$ describes a family of smooth closed  hypersurfaces $X: M \times [0,T) \to \mathbb{R}^{n}$ evolving according to
\begin{equation}
	\partial_t X(x,t) = \dfrac{1}{H_F}\nu_F(x,t), \label{IAMCF}
\end{equation}
where $H_F$ and $\nu_F=DF(\nu)$ denote the anisotropic mean curvature and the anisotropic unit normal vector of the evolving hypersurface $M_t=X(M,t)$, respectively. Suppose that the evolving hypersurface $M_t=\partial E_t$ are given by level sets of a function $u:\mathbb{R}^{n} \to \mathbb{R}$, with $E_t=\{x\in\mathbb{R}^{n}:u(x)<t\}$. If $u$ is smooth and $Du\neq 0$, then \eqref{IAMCF} is equivalent to the degenerate elliptic equation \begin{equation}\label{WIAMCF}
    \text{div}(DF(Du))=F(Du)
\end{equation}
in $\mathbb{R}^n\setminus \Omega$ and $\Omega=\{u<0\}$. 

The weak solution of \eqref{WIAMCF} was defined in \cite{DGX23} via the minimizing principle as in Huisken-Ilmanen \cite{HI01} for the isotropic case.  
Given an open bounded set $\Omega\subset \mathbb{R}^{n}$ with smooth boundary, a locally Lipschitz function $u$ on $\mathbb{R}^{n}$ is called a weak IAMCF starting from $\Omega$, if for every locally Lipschitz function $\varphi$  with $\{u\neq\varphi\}\subset\subset \mathbb{R}^{n}\setminus\Omega$ and any compact set $K\subset \mathbb{R}^{n}\setminus\Omega$ containing $\{u\neq\varphi\}$ we have 
  \begin{equation*}
        J_{F,u}^K(u) \leq J_{F,u}^K(\varphi)
    \end{equation*}
and $\Omega=\{u<0\}$, where the functional is defined as 
 \begin{equation*}
        J_{F,u}^K(\varphi) = \int_{K} \left[ F(D\varphi)+\varphi F(Du) \right] \mathrm{d}x.
    \end{equation*}
 We say that $u$ is a proper solution if in addition $\lim\limits_{|x|\to+\infty}u(x)=+\infty$.

The weak solution of \eqref{WIAMCF} can also be defined by set functional.
%An equivalent definition using a set functional is:
\begin{definition}
\label{def-minimizes}
    We say that $E$ minimizes $J_{F,u}$ in a set $U$ (minimizes on the outside, minimizes on the inside, resp.) if
    \begin{equation*}
        J_{F,u}^K(E) \leq J_{F,u}^K(G)
    \end{equation*}
    for any $G$ such that $E\triangle G\subset\subset U$  ($G\supset E$, $G\subset E$ resp.) and any compact set $K$ containing $E\triangle G$. Here $E\triangle G = (E\backslash G)\cup(G\backslash E)$, and 
    \begin{equation*}
        J_{F,u}^K(G) = \int_{\partial^* G \cap K} F(\nu)\mathrm{d}\mathcal{H}^{n-1} - \int_{G\cap K} F(Du) \mathrm{d}x
    \end{equation*}
    for a set $G$ of locally finite perimeter, and $\partial^*G$ denotes the reduced boundary of $G$.
\end{definition}

Then a locally Lipschitz function $u$ on $\mathbb{R}^{n}$ is a weak solution of IAMCF if for each $t>0$, $E_t=\{u<t\}$ minimizes $J_{F,u}$ in $\mathbb{R}^{n}\setminus\Omega$ in the sense of Definition \ref{def-minimizes}. Therefore, we also say that $E_t$ is a weak solution of IAMCF with initial data $E_0=\Omega$. 

We have the following comparison principle:
\begin{lemma}[see {\cite[Proposition 3.3]{DGX23}}]
    \label{lem-weakcom}
    Let $\{E_t\}_{t>0}$ and $\{F_t\}_{t>0}$ be two weak solutions of IAMCF with initial data $E_0$, $F_0$ respectively, and $E_0\subset F_0$, then $E_t\subset F_t$ as long as $E_t$ is precompact.
    
    In particular, for a given $E_0$ there exists at most one solution $\{E_t\}_{t>0}$ of \eqref{WIAMCF} such that $E_t$ is precompact. 
\end{lemma}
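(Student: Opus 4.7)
The plan is to adapt the strategy of Huisken and Ilmanen \cite{HI01} to the anisotropic setting. Three ingredients are essential: (i) the two-sided variational property of level sets of a weak solution (each $E_t = \{u<t\}$ is simultaneously an outward and an inward minimizer of $J_{F,u}$), (ii) the subadditivity of the anisotropic perimeter,
\[
|\partial^*(A\cup B)\cap K|_F + |\partial^*(A\cap B)\cap K|_F \le |\partial^* A\cap K|_F + |\partial^* B\cap K|_F,
\]
which follows from the isotropic case applied to the $F$-weighted perimeter measure, and (iii) the Wulff-shape $C^0$ barriers \eqref{C0 estimates}.

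The core of the argument is a single-time comparison. Fix $t_0 > 0$, write $A = E_{t_0}$, $B = F_{t_0}$, and pick any compact $K \supset A\cup B$. Applying the outward-minimizing property of $B$ against the competitor $A\cup B \supset B$, and the inward-minimizing property of $A$ against the competitor $A\cap B \subset A$, I obtain
\[
|\partial^* B\cap K|_F - |\partial^*(A\cup B)\cap K|_F \le -\int_{A\setminus B} F(Dv)\,dx,
\]
\[
|\partial^* A\cap K|_F - |\partial^*(A\cap B)\cap K|_F \le \int_{A\setminus B} F(Du)\,dx.
\]
Summing and invoking the subadditivity inequality yields the key estimate
\[
0 \le \int_{A\setminus B}\bigl(F(Du) - F(Dv)\bigr)\,dx.
\]

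The main obstacle is to promote this single-time integral inequality to an actual pointwise inclusion $E_t\subset F_t$. I would do this via a sliding argument in time: for $\tau\ge 0$, the shifted function $v-\tau$ is again a weak IAMCF, starting from $F_\tau = \{v<\tau\}$, and by \eqref{C0 estimates} one has $E_t \subset F_{t+\tau}$ for every $t\ge 0$ provided $\tau$ is chosen large enough. Define
\[
\tau^{*} = \inf\bigl\{\tau\ge 0 : E_t\subset F_{t+\tau}\text{ for every } t\ge 0\text{ with } E_t\text{ precompact}\bigr\},
\]
and aim to show $\tau^{*} = 0$. If $\tau^{*} > 0$, continuity of $u$ and $v$ yields a first touching time $t^{*}$ and a tangential contact portion where $\partial E_{t^{*}}$ meets $\partial F_{t^{*}+\tau^{*}}$. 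At such a configuration, applying the single-time inequality above forces the perimeter subadditivity to be saturated along the touching portion, and the strong-minimizing property of $\partial^{*} F_{t^{*}+\tau^{*}}$ then rules out any tangential contact between two distinct $F$-minimizers, allowing one to decrease $\tau^{*}$ by a positive amount. This contradicts the definition of $\tau^{*}$, so $\tau^{*} = 0$ and $E_t\subset F_t$ follows.

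The uniqueness statement is then immediate: when $E_0 = F_0$ and both $\{E_t\}$, $\{F_t\}$ are weak solutions, applying the comparison principle just proved in both directions gives $E_t\subset F_t$ and $F_t\subset E_t$, hence $E_t = F_t$ for every $t > 0$.
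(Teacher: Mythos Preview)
The paper does not supply its own proof of this lemma; it simply cites \cite[Proposition~3.3]{DGX23}. So there is no in-paper argument to compare against, and your proposal should be measured against the standard proof in \cite{DGX23} (which in turn adapts \cite[Theorem~2.2]{HI01}).

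Your single-time inequality is correctly derived: the outward minimality of $B=F_{t_0}$ against $A\cup B$ and the inward minimality of $A=E_{t_0}$ against $A\cap B$ are legitimate because $A\setminus B$ avoids both $E_0$ and $F_0$ (using $E_0\subset F_0\subset B$) and is precompact by hypothesis, and the anisotropic perimeter subadditivity holds just as in the isotropic case.

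The gap is in the second half. Your sliding argument asserts that at $\tau^*>0$ there is a ``first touching time $t^*$'' and that a ``strong-minimizing property'' then forbids tangential contact, allowing $\tau^*$ to be decreased. Neither step is justified in the weak setting: level sets of a weak solution can jump, so a clean first touching time need not exist, and the exclusion of tangential contact between two weak $F$-minimizing boundaries is not a standard lemma you can invoke---it is essentially a strong maximum principle for weak solutions, which is precisely what you are trying to prove. As written, this part is circular or at best a heuristic.

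The route taken in \cite{HI01} and \cite{DGX23} avoids the sliding argument entirely by working at the function level rather than set-by-set. One tests $u$ against the competitor $\min(u,v-s)$ for small $s>0$ (admissible because $\{u>v-s\}$ is precompact and avoids $\overline{E_0}$), and $v$ against $\max(u+s,v)$; adding the two variational inequalities and using $F(D\min(u,v-s))+F(D\max(u+s,v))=F(Du)+F(Dv)$ a.e.\ gives
\[
\int_{\{u>v-s\}}\bigl(u-(v-s)\bigr)\bigl(F(Du)-F(Dv)\bigr)\,dx \ge 0,
\]
and integrating in $s$ (or varying $s$) forces $|\{u>v\}|=0$, hence $u\le v$ and $E_t\subset F_t$. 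This is more direct and does not require any touching-point analysis. Your uniqueness deduction at the end is fine once the comparison is established.
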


%The weak anisotropic mean curvature is defined by the first variational formula.
%\begin{definition}[Weak anisotropic mean curvature]
%    \label{def-wamc}
%    Let $M^n\subset\mathbb{R}^{n+1}$ be a hypersurface of $C^1$ or $C^1$ with a small singular set and locally finite Hausdorff measure. A locally integrable function $H_F$ on $M$ is called weak anisotropic mean curvature if it satisfies
%    \begin{equation}
%        \int_{M^n} \mathrm{div}_{F,M}(V) F(\nu)\mathrm{d}\mathcal{H}^{n} = \int_{M^n} H_F \langle V,\nu \rangle \mathrm{d}\mathcal{H}^{n},
%        \label{equ-wamc}
%    \end{equation}
%    for any vector fields $V\in C_c^{\infty}(\mathbb{R}^{n+1})$, where $\mathrm{div}_{F,M}(V)=\mathrm{div}(V)-\left\langle D_{\nu_F} V, \dfrac{\nu}{F(\nu)} \right\rangle$.
%\end{definition}

%\begin{lemma}[see {\cite[Proposition 3.5]{DGX23}}]
%    \label{lem-level}
%    Let $u$ be a weak solution of \eqref{WIAMCF} with initial data $E_0$ and let $M_t=\partial\{u<t\}$. Then for a.e. $t$, the weak anisotropic mean curvature $H_F$ of $M_t$ satisfies
%    \begin{equation*}
%        H_F = F(Du)\ \ a.e.\ x\in M_t.
%    \end{equation*}
%\end{lemma}

We state the following two properties of weak IAMCF, whose proofs closely follow those of Theorem 2.1 and Lemma 3.3 in \cite{HI01}, respectively. The minor modification involves replacing the Euclidean norm of $Du$ with $F(Du)$.
\begin{lemma}[Compactness]
    \label{lem-compact}
    Let $u_i$ be a sequence of solutions of \eqref{eq:WIAMCF} in open sets $U_i\subset\mathbb{R}^{n}$ such that
    \begin{equation*}
        u_i\to u,\ \ U_i\to U
    \end{equation*}
    locally uniformly, and for each $K\subset\subset U$,
    \begin{equation*}
        \sup\limits_{K} F(Du_i)\leq C(K)
    \end{equation*}
    for large $i$. Then $u$ is a solution of \eqref{eq:WIAMCF} in $U$.
\end{lemma}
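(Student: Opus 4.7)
Following the strategy of \cite[Theorem 2.1]{HI01}, the plan is to verify the variational minimizing inequality for the limit $u$ by passing to the limit in the corresponding inequalities for $u_i$. First, by the uniform local gradient bound, the sequence $\{u_i\}$ is locally equi-Lipschitz on $U$; up to a subsequence, $Du_i \rightharpoonup^{\ast} Du$ in $L^\infty_{\mathrm{loc}}(U)$ and the limit $u$ lies in $W^{1,\infty}_{\mathrm{loc}}(U)$ with $F(Du) \leq C(K)$ a.e.\ on each $K \subset\subset U$.

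Next, given a locally Lipschitz test function $\varphi$ with $A := \{u\neq\varphi\} \subset\subset U$ and a compact set $K\subset U$ containing $A$, I would choose nested open sets $A\subset V \subset\subset W \subset\subset \mathrm{int}(K)$. For large $i$ one has $W \subset U_i$. Setting $\epsilon_i := 2\|u_i - u\|_{L^\infty(\overline W)} \to 0$, define the competitor
\[
    \varphi_i := \max\bigl\{\varphi - \epsilon_i,\; \min\{u_i,\, \varphi + \epsilon_i\}\bigr\}
\]
on $W$ and extend $\varphi_i \equiv u_i$ off $W$. Then $\varphi_i$ is locally Lipschitz, $\{u_i\neq\varphi_i\}\subset\subset W\subset U_i$, $\varphi_i\to\varphi$ uniformly, and $D\varphi_i = D\varphi$ on $A_i:=\{|u_i-\varphi|>\epsilon_i\}$ while $D\varphi_i = Du_i$ elsewhere; one checks that $A_i \subset A$ and $|A\setminus A_i|\to 0$.

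Third, the minimizing property $J_{F,u_i}^K(u_i)\le J_{F,u_i}^K(\varphi_i)$ reduces (since $\varphi_i\equiv u_i$ off $A_i$) to
\[
    \int_{A_i} F(Du_i)\,dx \le \int_{A_i} F(D\varphi)\,dx + \int_{A_i}(\varphi_i - u_i)F(Du_i)\,dx.
\]
Passing $i\to\infty$, Reshetnyak-type lower semicontinuity of the convex integrand $F$ yields $\int_A F(Du)\,dx \le \liminf_i\int_{A_i} F(Du_i)\,dx$, and the first term on the right converges to $\int_A F(D\varphi)\,dx$ by dominated convergence together with $|A\setminus A_i|\to 0$. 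The weighted term is handled by splitting $A_i$ according to the sign of $\varphi_i - u_i$ and invoking, on each piece, the one-sided minimizing property recalled in Definition \ref{def-minimizes} (minimizing on the outside on $\{\varphi_i>u_i\}$, on the inside on $\{\varphi_i<u_i\}$) to obtain matching one-sided limits. Combining these estimates gives $J_{F,u}^K(u)\le J_{F,u}^K(\varphi)$, completing the verification that $u$ is a weak IAMCF solution on $U$.

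The main technical obstacle is the passage to the limit in the term $\int_{A_i}(\varphi_i-u_i)F(Du_i)\,dx$, since weak-$\ast$ convergence of $Du_i$ does not automatically imply weak-$\ast$ convergence of the nonlinear quantity $F(Du_i)\,dx$ as a Radon measure on $K$. The resolution, following \cite{HI01}, is to exploit the one-sided minimizing properties of the level sets $E_t^i=\{u_i<t\}$ to control $F(Du_i)\,dx$ separately from above and below on each sub-region, via the BV compactness of $\chi_{E_t^i}$ (for a.e.\ $t$ by a coarea argument using the uniform gradient bound) together with lower semicontinuity of the anisotropic perimeter $|\partial^* \cdot|_F$; these standard facts for $F$-perimeter transplant the argument of \cite[Sections 2--3]{HI01} to the anisotropic setting.
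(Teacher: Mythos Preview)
Your proposal is correct and follows the route the paper indicates: the paper itself gives no detailed proof, stating only that the argument of \cite[Theorem~2.1]{HI01} carries over with $|Du|$ replaced by $F(Du)$, and your sketch reproduces that argument. The only loose point is the handling of the cross term in your third step: the sign-splitting via the one-sided set-minimizing of Definition~\ref{def-minimizes} does not by itself close the argument, and the actual mechanism in \cite{HI01} (toward which your final paragraph correctly points) is to first establish $F(Du_i)\to F(Du)$ in $L^1_{\mathrm{loc}}$ by testing the minimizing property of $u_i$ against the cutoff interpolant $\psi_i=u_i+\eta(u-u_i)$ with $\eta\in C_c^\infty(U)$, for which $|\psi_i-u_i|\le\epsilon_i$ makes the cross term $O(\epsilon_i)$ and yields $\limsup\int\eta\,F(Du_i)\le\int\eta\,F(Du)$ to complement lower semicontinuity; once this weak convergence of $F(Du_i)\,dx$ is in hand, the limit passage for a general competitor $\varphi$ is immediate.
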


\begin{lemma}[Smooth flow]
    \label{lem-smweak}
    Let $\{M_t\}_{c\leq t<d}$ be a smooth family of hypersurfaces with positive anisotropic mean curvature that solves \eqref{IAMCF} classically. Let $u=t$ on $M_t$, $u<c$ in the region bounded by $M_c$, and set $E_t=\{u<t\}$. Then for $c\leq t<d$, $E_t$ minimizes $J_{F,u}$ in $E_d\backslash \overline{E_c}$ in the sense of Definition \ref{def-minimizes}. In other words, smooth flows of IAMCF satisfy the weak formulation in the domain they foliate.
\end{lemma}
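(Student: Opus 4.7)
The strategy is to adapt the argument of \cite[Lemma 3.3]{HI01} to the anisotropic setting by replacing the Euclidean outward unit normal $Du/|Du|$ with the anisotropic vector field $X := DF(Du)$. On the foliated region $E_d \setminus \overline{E_c}$ the hypothesis that $\{M_t\}$ solves \eqref{IAMCF} classically with $H_F > 0$ implies that $u$ is smooth and $Du \neq 0$, so the equation \eqref{WIAMCF} holds pointwise as $\operatorname{div}(X) = F(Du)$. Moreover, by \eqref{fprop2} we have $F^\circ(X) \equiv 1$, and on each level set $M_t = \partial E_t$ with Euclidean outer unit normal $\nu_{E_t} = Du/|Du|$ the $1$-homogeneity of $F$ (cf.\ \eqref{fprop1}) yields the pointwise identity $\langle X, \nu_{E_t}\rangle = F(\nu_{E_t})$. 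These are the three analytic inputs needed to run the Huisken--Ilmanen argument, once the Euclidean Cauchy--Schwarz $\langle Du/|Du|, \nu\rangle \leq 1$ is swapped for its anisotropic counterpart $\langle X, \nu \rangle \leq F^\circ(X)\, F(\nu) = F(\nu)$ supplied by \eqref{eq:anisocauchy}.

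Fix $t \in [c,d)$, a compact $K \subset E_d \setminus \overline{E_c}$, and a set $G$ of locally finite perimeter with $E_t \triangle G \subset\subset K$. First I would carry out the outer comparison by applying the divergence theorem to $X$ on $A^+ := G \setminus E_t$. The reduced boundary $\partial^* A^+$ decomposes, up to an $\mathcal{H}^{n-1}$-null set, into the part of $\partial^* G$ lying outside $\overline{E_t}$ and the part of $M_t$ lying inside $G$. On the first piece the outer normal of $A^+$ is $\nu_G$, yielding a boundary integrand bounded by $F(\nu_G)$; on the second the outer normal is $-\nu_{E_t}$ and the integrand equals $-F(\nu_{E_t})$ exactly. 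Rearranging produces
\[
\int_{M_t \cap G} F(\nu_{E_t})\, d\mathcal{H}^{n-1} + \int_{A^+} F(Du)\, dx \;\leq\; \int_{\partial^* G \setminus \overline{E_t}} F(\nu_G)\, d\mathcal{H}^{n-1}.
\]
A symmetric application of the divergence theorem on $A^- := E_t \setminus G$, using the evenness of $F$ so that $\langle X, -\nu_G\rangle \leq F(\nu_G)$, yields
\[
\int_{M_t \setminus \overline{G}} F(\nu_{E_t})\, d\mathcal{H}^{n-1} \;\leq\; \int_{\partial^* G \cap E_t} F(\nu_G)\, d\mathcal{H}^{n-1} + \int_{A^-} F(Du)\, dx.
\]
Summing these and using the identity $\int_{A^+} F(Du) - \int_{A^-} F(Du) = \int_{G \cap K} F(Du) - \int_{E_t \cap K} F(Du)$, together with the fact that $M_t \cap K$ decomposes as $(M_t \cap G) \sqcup (M_t \setminus \overline{G})$ up to a null set, delivers $J_{F,u}^K(E_t) \leq J_{F,u}^K(G)$, which is the required minimization property in the sense of Definition \ref{def-minimizes}.

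The expected obstacle is purely measure-theoretic bookkeeping: verifying the reduced-boundary decomposition of $\partial^* A^\pm$ modulo $\mathcal{H}^{n-1}$-null sets, and invoking the Gauss--Green formula for the $BV$-characteristic functions of $A^\pm$ against the smooth bounded vector field $X$. Both points are standard for sets of finite perimeter and are independent of the anisotropic ingredient. The entire anisotropic content of the lemma is thus captured by the single pointwise bound $\langle X, \nu \rangle \leq F(\nu)$, which follows from $F^\circ(X) \equiv 1$ and \eqref{eq:anisocauchy}; outside of $K$ the sets $E_t$ and $G$ agree, so the boundary contributions cancel and no additional argument is needed there.
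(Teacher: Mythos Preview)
Your proposal is correct and takes essentially the same approach as the paper. Both arguments rest on the calibration vector field $X=\nu_{F,u}=DF(Du)$, the pointwise equation $\operatorname{div}X=F(Du)$, the identity $\langle X,\nu_{E_t}\rangle=F(\nu_{E_t})$ on $M_t$, and the anisotropic Cauchy--Schwarz bound $\langle X,\nu_G\rangle\le F(\nu_G)$ coming from $F^\circ(X)\equiv 1$; the only difference is organizational---you apply the divergence theorem separately on $A^{\pm}=G\setminus E_t,\,E_t\setminus G$ and sum, whereas the paper runs a single chain of equalities from $J_{F,u}^K(E_t)$ to $\int_{\partial^*G\cap K}\langle\nu_G,\nu_{F,u}\rangle-\int_{G\cap K}F(Du)$ before invoking the same inequality.
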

\begin{proof}
    For any $t\in (c,d)$, we need to show that: for any set $G$ with finite perimeter satisfying $E_t\triangle G \subset\subset E_d\backslash \overline{E_c} $ and any compact set $K$ containing $E_t\triangle G$, the following inequality holds:
    \begin{align}
        &\int_{\partial^* E_t \cap K} F(\nu)\mathrm{d}\mathcal{H}^{n-1} - \int_{E_t\cap K} F(Du) \mathrm{d}x \nonumber\\
        \leq &\int_{\partial^*G\cap K} F(\nu) \mathrm{d}\mathcal{H}^{n-1} - \int_{G\cap K} F(Du) \mathrm{d}x. \label{equ-jfu}
    \end{align}

    Consider the vector field $\nu_{F,u}=DF(Du)$. For $A=E_t\backslash G$ or $G\backslash E_t$, the equation \eqref{WIAMCF} implies
    \begin{equation}
        \int_{A} \mathrm{div}(\nu_{F,u}) \mathrm{d}x-\int_{A} F(Du) \mathrm{d}x=0
        \label{equ-A}
    \end{equation}
    Using the divergence theorem, property \eqref{fprop1}, and equation \eqref{equ-A}, we compute
    \begin{align}\label{s2.smoothflow}
        &\int_{\partial^* E_t \cap K} F(\nu) \mathrm{d}\mathcal{H}^{n-1}  - \int_{E_t \cap K} F(Du) \mathrm{d}x \nonumber\\
        =& \int_{\partial^* E_t \cap K} \nu_{\partial^* E_t}\cdot \nu_{F,u} \mathrm{d}\mathcal{H}^{n-1} - \int_{E_t\cap K} F(Du) \mathrm{d}x \notag\\
        =& \int_{E_t\cap K} \mathrm{div}(\nu_{F,u}) \mathrm{d}x - \int_{E_t\cap K} F(Du) \mathrm{d}x \notag\\
        =& \int_{G\cap K} \mathrm{div}(\nu_{F,u}) \mathrm{d}x - \int_{G\cap K} F(Du) \mathrm{d}x \notag\\
        =& \int_{\partial^* G\cap K} \nu_{\partial^* G}\cdot \nu_{F,u} \mathrm{d}\mathcal{H}^{n-1} - \int_{G\cap K} F(Du) \mathrm{d}x.
    \end{align}
    Applying anisotropic Cauchy-Schwarz inequality \eqref{eq:anisocauchy} and \eqref{fprop2} yields
    $$\nu_{\partial^* G}\cdot \nu_{F,u} \leq F(\nu_{\partial^* G}) F^\circ(\nu_{F,u})=F(\nu_{\partial^* G}).$$
    Substituting this inequality into the last line of \eqref{s2.smoothflow} establishes \eqref{equ-jfu}.
\end{proof}

\section{Local gradient estimate of anisotropic $p$-harmonic functions}
\label{sec:gradientest}

In this section, we prove the local gradient estimates for anisotropic $p$-harmonic functions as well as for the weak solution of IAMCF. 

Let $u$ solve the equation \eqref{s1.eq1} in the Wulff shape $\mathcal{W}_R(x_0)$. By setting $G(\xi)=\frac{1}{2}F^2(\xi)$, we rewrite \eqref{s1.eq1} as
\begin{equation}\label{finslerp2}
\Div\left( G^{\frac{p}{2}-1}(Du)G_{\xi}(Du) \right) = 2 G^{\frac{p}{2}}(Du).
\end{equation}
Let
\begin{equation*}
	A|_{\xi} = D^2 G(\xi) + \dfrac{p-2}{2}\dfrac{DG(\xi)\otimes DG(\xi)}{G(\xi)},
\end{equation*}
which is a uniformly positive definite matrix for $p>1$. Consider the linear elliptic operator:
\begin{equation}\label{eq:p-WIAMCF-2}
    \mathscr{L}(\varphi) = \Div \left( G^{\frac{p}{2}-1}(Du)A|_{Du}(D \varphi) \right) - p G^{\frac{p}{2}-1}(Du)\langle G_{\xi}(Du),D\varphi \rangle.
\end{equation}

In the following, we use the convention that
$$G_i=\dfrac{\partial G(\xi)}{\partial\xi_i},\ \ G_{ij}=\dfrac{\partial^2 G(\xi)}{\partial\xi_i\partial\xi_j},\ \ G_{ijk}=\dfrac{\partial^3 G(\xi)}{\partial\xi_i\partial\xi_j\partial\xi_k}.$$
We have the following Bochner type formula.
\begin{lemma}%[Bochner Formula]
    Let $f=G(Du)$. Then
    \begin{equation}\label{Bochner}
        \mathscr{L}(f) = f^{\frac{p}{2}-1}A_{kj}u_{j\ell}G_{\ell s}u_{sk},
    \end{equation}
    where $A_{ij}=(A|_{Du})_{ij}=G_{ij}(Du)+\frac{p-2}{2}\frac{G_{i}(Du)G_{j}(Du)}{f}.$
\end{lemma}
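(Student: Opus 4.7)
The plan is to compute $\mathscr{L}(f)$ by expanding it in non-divergence form, use the chain rule to peel off a quadratic-in-Hessian term that already matches the target, then eliminate the remaining third-order derivatives of $u$ via the differentiated equation and check that the leftover zeroth-order pieces cancel. By the product rule applied to the divergence,
\begin{equation*}
\mathscr{L}(\varphi) = a\, A_{kj}\, \partial^2_{kj}\varphi + \bigl(\partial_k a\cdot A_{kj} + a\, \partial_k A_{kj} - p\, a\, G_j\bigr)\, \partial_j \varphi,
\end{equation*}
where $a := G^{p/2-1}(Du)$ and $A_{kj}$, $G_j$, $G_{ij}$ are evaluated at $Du$. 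Taking $\varphi = f = G(Du)$, the chain rule gives $\partial_j f = G_s u_{sj}$ and $\partial^2_{kj} f = G_{st} u_{tk} u_{sj} + G_s u_{sjk}$, so the piece $a\, A_{kj} G_{st} u_{tk} u_{sj}$ coincides, after exploiting the symmetries $A_{kj}=A_{jk}$ and $u_{ij}=u_{ji}$ and relabeling dummy indices, with $f^{p/2-1} A_{kj} u_{j\ell} G_{\ell s} u_{sk}$, the desired right-hand side of \eqref{Bochner}.

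It therefore remains to show that all remaining terms cancel. To dispose of the third-order piece $a\, A_{kj} G_s u_{sjk}$, I differentiate the reformulated equation $A_{ij}(Du) u_{ij} = 2G(Du)$ in $x_m$ and contract with $G_m(Du)$ to obtain
\begin{equation*}
A_{ij}(Du)\, G_m(Du)\, u_{ijm} = 2 G_m f_m - G_m\, \partial_m A_{ij}\cdot u_{ij}.
\end{equation*}
By the full symmetry of third partial derivatives of $u$, the left-hand side coincides with $A_{kj} G_s u_{sjk}$, so the substitution eliminates every third derivative of $u$. Expanding $\partial_k a = (p/2-1) G^{p/2-2} f_k$, $\partial_k A_{kj}$, and $G_m \partial_m A_{ij}$ via the chain rule and simplifying repeatedly with the two identities $G_s u_{sk} = f_k$ and $A_{ks} u_{sk} = 2G$ (the equation itself), the surviving terms organize into four families: multiples of $a G_j f_j$, of $a G_{ij} f_i f_j/G$, of $a (G_j f_j)^2/G^2$, and of $a G_{ijs}(Du)\, u_{ij} f_s$. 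In each family the scalar coefficients collapse to zero by elementary algebra, e.g.\ $(2-p)+(p-2)=0$ for the first family and $(p-2)\bigl(2+(p-2)-p\bigr)=0$ for the third.

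The main subtlety is the cancellation of the cubic family involving $G_{ijs}(Du)$, the third derivative of $G(\xi)=\tfrac12 F^2(\xi)$. This term is wholly absent in the isotropic case (where $G_{ijs}\equiv 0$), and here its vanishing requires the \emph{full} symmetry of $G_{ijs}$ under permutation of all three indices: two apparently distinct contributions of opposite sign collapse into the same expression only after an appropriate relabeling (e.g.\ swapping $s\leftrightarrow j$). This is the only genuinely anisotropic feature that enters the derivation of \eqref{Bochner}; no smallness or refined control on $D^3 G$ is needed here, as such control will enter only later in the subsequent maximum-principle argument for the gradient estimate.
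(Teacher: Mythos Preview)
Your approach is correct and will reach \eqref{Bochner}: expanding $\mathscr{L}$ in non-divergence form, then killing the third-order term $A_{kj}G_s u_{sjk}$ by differentiating the non-divergence equation $A_{ij}u_{ij}=2G$ and contracting with $G_m$, does produce exactly the four families you describe, and the scalar cancellations (including the $G_{ijs}$ one via full index symmetry) all check out. However, this is a genuinely different and much heavier route than the paper's. The paper never leaves divergence form and never expands $A_{kj}$: it differentiates the \emph{divergence-form} equation \eqref{finslerp2} in $x_i$ to obtain $\partial_k\bigl(f^{p/2-1}A_{kl}u_{li}\bigr)=pf^{p/2-1}f_i$, then inside $\mathscr{L}(f)=\partial_k\bigl(f^{p/2-1}A_{kj}f_j\bigr)-pf^{p/2-1}G_if_i$ writes $f_j=G_l u_{jl}$ and applies the product rule once. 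One of the two pieces is $\partial_k\bigl(f^{p/2-1}A_{kj}u_{jl}\bigr)G_l$, which by the differentiated equation equals $pf^{p/2-1}f_lG_l$ and cancels the drift term exactly; the other piece $f^{p/2-1}A_{kj}u_{jl}G_{ls}u_{sk}$ is already the right-hand side of \eqref{Bochner}. That argument is four lines and completely sidesteps the $G_{ijs}$ bookkeeping, the four-family organization, and all the coefficient-matching algebra that your approach has to carry out. What you gain from your route is nothing structural; what the paper's route buys is that the anisotropic third-derivative term $D^3G$ simply never appears.
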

\begin{proof}
We first note that 
\begin{equation*}
    f_i=\frac{\partial f}{\partial x^i}=G_ku_{ki}.
\end{equation*}
Differentiating \eqref{finslerp2} with respect to $x^i$, we have
\begin{align}\label{dif}
 0=&\frac{\partial}{\partial x^i}\left(\frac{\partial}{\partial x^k}\left(f^{\frac{p}{2}-1}G_k\right)-2f^{\frac{p}{2}}\right)\nonumber\\
 =&\frac{\partial}{\partial x^k}\left(f^{\frac{p}{2}-1}G_{kl}u_{li}+\frac{p-2}{2}f^{\frac{p}{2}-2}f_iG_k\right)-pf^{\frac{p}{2}-1}f_i\nonumber\\
  =&\frac{\partial}{\partial x^k}\left(f^{\frac{p}{2}-1}G_{kl}u_{li}+\frac{p-2}{2}f^{\frac{p}{2}-2}G_lu_{li}G_k\right)-pf^{\frac{p}{2}-1}f_i\nonumber\\
 =&\dfrac{\partial}{\partial x^k} \left( f^{\frac{p}{2}-1} A_{kl} u_{li} \right) - p f^{\frac{p}{2}-1}f_{i}.
\end{align}
Hence, by \eqref{eq:p-WIAMCF-2} we have
\begin{align*}
	\mathscr{L}(f) =& \dfrac{\partial}{\partial x^k} \left( f^{\frac{p}{2}-1}A_{kj}f_{j} \right) - pf^{\frac{p}{2}-1}G_{i}f_{i} \notag\\
    =&\dfrac{\partial}{\partial x^k} \left( f^{\frac{p}{2}-1}A_{kj}u_{jl}G_l \right) - pf^{\frac{p}{2}-1}G_{i}f_{i}\nonumber\\
    =& \dfrac{\partial}{\partial x^k}\left( f^{\frac{p}{2}-1}A_{kj} u_{jl} \right) G_l + f^{\frac{p}{2}-1}A_{kj} u_{jl}G_{ls}u_{sk} - p f^{\frac{p}{2}-1}G_if_{i} \notag\\
	=& f^{\frac{p}{2}-1}A_{kj}u_{jl}G_{ls}u_{sk},
\end{align*}
as desired.
\end{proof}

To prove the local gradient estimate for $u$ in Theorem \ref{Sharp-C1}, we multiply $f$ by a suitable cut-off function $\eta\in C_c^\infty(\mathcal{W}_R(x_0))$ in the Wulff shape $\mathcal{W}_R(x_0)$. The required properties for $\eta$ will be determined later.  Let 
$$Q=f\eta$$
which achieves its maximum in the interior of $\mathcal{W}_R(x_0)$. We will derive a differential inequality satisfied by $Q$ at the maximum point. 

\begin{proof}[Proof of Theorem \ref{Sharp-C1}]
We first observe that, at the maximum point of $Q$ there hold
\begin{equation}\label{first-cond}
	0=D_i Q = f_i \eta + f \eta_i= G_ku_{ki}\eta+f\eta_i,
\end{equation}
\begin{equation}\label{second-cond}
	0 \geq \mathscr{L}(Q).
\end{equation}
Applying \eqref{first-cond} to \eqref{finslerp2}, we have
\begin{equation}\label{finslerlap3}
	G_{ik}u_{ik} - \dfrac{p-2}{2}\dfrac{G_k\eta_k}{\eta} = 2 f
\end{equation}
at the maximum point of $Q$. 

In the following, we calculate at the maximum point of $Q$ that
\begin{align*}
\mathscr{L}(Q) 
    =&\mathrm{div}\left( f^{\frac{p}{2}-1}A(DQ) \right) - p f^{\frac{p}{2}-1} \langle DG(Du), DQ \rangle \notag\\
	=& \eta \mathscr{L}(f) + \mathrm{div}\left( f^{\frac{p}{2}}A(D\eta) \right) \notag\\
	& + \langle f^{\frac{p}{2}-1} A (Df),D\eta \rangle - p f^{\frac{p}{2}} \langle DG(Du),D\eta \rangle \notag\\
	=& \eta\mathscr{L}(f) + f^{\frac{p}{2}}\mathrm{div}(A(D\eta)) \\
    &\quad - \left( 1+\dfrac{p}{2} \right)f^{\frac{p}{2}}\dfrac{\langle A(D\eta), D\eta\rangle}{\eta} - p f^{\frac{p}{2}}\langle DG(Du),D\eta \rangle,
    \end{align*}
where we used \eqref{first-cond} in the last equality. Note that
\begin{align}
	\mathrm{div}(A(D\eta))=&\dfrac{\partial}{\partial x^k}\left(A_{ik}\eta_i\right)\nonumber\\
    =&A_{ik}\eta_{ik}+\dfrac{\partial}{\partial x^k}\left(  G_{ki}+\dfrac{p-2}{2}\dfrac{G_kG_i}{G} \right)\eta_i  \notag\\
    =&A_{ik}\eta_{ik} + G_{kij}u_{jk}\eta_i+\frac{p-2}{2}\frac{G_{kl}u_{lk}G_i+G_kG_{il}u_{lk}}{G}\eta_i\nonumber\\
    &\quad -\frac{p-2}{2}\frac{G_kG_i}{G^2}f_k\eta_i\nonumber\\
    =& A_{ik}\eta_{ik} + G_{kij}u_{jk}\eta_i + \frac{p(p-2)}{4}\frac{(G_{k}\eta_{k})^2}{f\eta} \notag\\
    &+(p-2)G_{i}\eta_{i}-\frac{p-2}{2}\frac{G_{ik}\eta_{i}\eta_{k}}{\eta}, \label{eq:divaeta}
\end{align}
where we used \eqref{first-cond} and \eqref{finslerlap3} in the last equality. Hence by \eqref{Bochner} and \eqref{eq:divaeta}, we have
\begin{equation}\label{LQ}
    \begin{split}
       \mathscr{L}(Q)=&\eta f^{\frac{p}{2}-1}G_{kj}u_{j\ell}G_{\ell s}u_{sk}+f^{\frac{p}{2}}G_{kij}u_{jk}\eta_{i}\\
       &+f^{\frac{p}{2}}A_{ik}\eta_{ik}-\frac{p+2}{2}f^{\frac{p}{2}}\dfrac{G_{ij}\eta_{i}\eta_{j}}{\eta}\\
       &-\frac{p-2}{2}f^{\frac{p}{2}-1}\frac{(G_{k}\eta_{k})^2}{\eta}-2f^{\frac{p}{2}}G_{i}\eta_{i}.
    \end{split} 
\end{equation}
% \begin{equation}\label{LQ}
%     \begin{split}
%        \mathcal{L}(Q)=&\eta \mathcal{L}(f)+f^{\frac{p}{2}}G_{kij}u_{jk}\eta_{i}+\frac{p-2}{2}f^{\frac{p}{2}-1}G_{ik}u_{ik}G_{i}\eta_{i}+\frac{p-2}{2\eta}f^{\frac{p}{2}}G_{ij}\eta_{i}\eta_{j} \\
%        &+\frac{p-2}{2\eta}f^{\frac{p}{2}-1}(G_{k}\eta_{k})^2+f^{\frac{p}{2}}A_{ik}(\eta_{ik}-(1+\frac{p}{2})\frac{\eta_{i}\eta_{k}}{\eta})-pf^{\frac{p}{2}}G_{k}\eta_{k}, 
%     \end{split}  
% \end{equation}

% Combining \eqref{LQ}, \eqref{first-cond} and \eqref{finslerlap3}, 

We estimate the terms on the right hand side of \eqref{LQ} separately. 

\textbf{Step 1. Estimate $G_{kij}u_{jk}\eta_{i}$. } As in \cite[Page 129]{CL07}, let $\{\phi_1,\phi_2,\cdots,\phi_n\}$ be the basis of $\mathbb{R}^n$ such that $G_{ij}=\delta_{ij}$ at the maximum point of $Q$ and $(G^{pq})$ the inverse matrix of $(G_{ij})$. Applying the Cauchy-Schwarz inequality for positive definite matrix $(G_{ij})$ yields
\begin{align*}
	G_{ijk}u_{ij}\eta_k =& G_{pk} G_{ijl}u_{ij}G^{lp}\eta_k\\
    \leq& \sqrt{ G_{ij}\eta_i\eta_j}\sqrt{ G^{pq}G_{ijp}u_{ij}G_{klq}u_{kl}} \\
    \leq&\dfrac{\varepsilon}{\eta}G_{ij}\eta_i\eta_j + \dfrac{\eta}{4\varepsilon} G^{pq}G_{ijp}u_{ij}G_{klq}u_{kl}.
\end{align*}
Since $F$ is homogeneous of degree one, there exists a positive constant $C_1=C_1(F)$ such that 
	\begin{equation}\label{small-third-der}
		FD^3 G|_{\xi}(\alpha,\beta,\gamma)\leq C_1\left( D^2G|_{\xi}(\alpha,\alpha) \cdot D^2G|_{\xi}(\beta,\beta) \cdot D^2G|_{\xi}(\gamma,\gamma) \right)^{\frac{1}{2}}
	\end{equation}
for any vectors $\alpha,\beta,\gamma\in\mathbb{R}^n$. This is due to that both sides of \eqref{small-third-der} are homogeneous of degree zero and $D^2G$ is positive definite. The estimate \eqref{small-third-der} then implies:
\begin{align*}
	&G^{pq}G_{ijp}u_{ij}G_{klq}u_{kl}\\
    = &\dfrac{1}{2f}FD^3G|_{Du}\left( G^{pq}\phi_p,u_{ij}\phi_i,\phi_j \right) \\
    &\qquad \times FD^3G|_{Du}\left( \phi_q,u_{kl}\phi_k,\phi_l \right) \notag\\
	\leq &\dfrac{C_1^2}{2f}\left( D^2G|_{Du}(G^{pq}\phi_p,G^{rq}\phi_r)D^2G|_{Du}(u_{ij}\phi_i,u_{sj}\phi_s)D^2G|_{Du}(\phi_j,\phi_j)\right)^{\frac{1}{2}}\notag\\
	&\ \ \ \ \ \ \ \ \times\left(D^2G|_{Du}(\phi_q,\phi_q)D^2G|_{Du}(u_{kl}\phi_k,u_{tl}\phi_t)D^2G|_{Du}(\phi_l,\phi_l) \right)^{\frac{1}{2}}\notag\\
	=&\dfrac{C_1^2\sqrt{n}}{2f} G_{ij}G_{kl}u_{ik}u_{jl}.
\end{align*}
This proves the bound:
\begin{align}\label{est:Gijk}
	\left|G_{ijk}u_{ij}\eta_k\right| \leq \dfrac{\varepsilon}{\eta}G_{ij}\eta_i\eta_j + \dfrac{\eta}{4\varepsilon}\dfrac{C_1^2\sqrt{n}}{2f} G_{ij}G_{kl}u_{ik}u_{jl},
\end{align}
where $\varepsilon>\sqrt{n}C_1^2/8$ is to be determined later. Substituting  \eqref{est:Gijk} into \eqref{LQ}  gives
\begin{equation}\label{LQ-1}
    \begin{split}
        \mathscr{L}(Q) \geq & \left(1-\dfrac{\sqrt{n}C_1^2 }{8\varepsilon }\right)\eta f^{\frac{p}{2}-1}G_{ij}G_{kl}u_{ik}u_{jl} + f^{\frac{p}{2}}A_{ik}\eta_{ik} \\
        &-\dfrac{p+2+2\varepsilon}{2}f^{\frac{p}{2}}\frac{G_{ij}\eta_{i}\eta_{j}}{\eta}-\frac{p-2}{2}f^{\frac{p}{2}-1}\frac{(G_{k}\eta_{k})^2}{\eta}-2f^{\frac{p}{2}}G_{i}\eta_{i}.
    \end{split}
\end{equation}

\textbf{Step 2. Estimate the term $G_{ij}G_{kl}u_{ik}u_{jl}$.} We choose a local orthonormal basis $\{\phi_1,\phi_2, \dots, \phi_n\}$ of $\mathbb{R}^{n}$ with respect to the metric $G_{ij}$ at the maximum point of $Q$, such that $\phi_1 = \dfrac{Du}{|Du|_G}$ and $G_{ij}=\delta_{ij}$. Then we have $u_1 = |Du|_G$ and $u_i = 0$ for $i \geq 2$. Since 
$$G_{ij}u_j=G_i\ \ \mbox{and}\ \ G_{ij}u_iu_j=2G$$ 
by 2-homogeneity of $G$, then $|Du|_G^2=G_{ij}u_iu_j = 2G$.  Consequently,
\begin{equation}
	f_k=G_{l}u_{lk}=G_{il}u_i u_{lk}=u_{ik}u_i = u_{1k}u_1,
    \label{s3.fk}
\end{equation} 
and \eqref{finslerlap3} becomes
\begin{equation*}
	\sum\limits_{i=1}^{n} u_{ii} - \dfrac{p-2}{2} \dfrac{G_k\eta_k}{\eta} = 2f.
\end{equation*}

Using the relation
\begin{equation*}
	\sum\limits_{j\geq 2} u_{jj} = 2 f + \dfrac{p-2}{2}\dfrac{G_k\eta_k}{\eta}-u_{11},
\end{equation*}
we derive by Cauchy-Schwarz inequality that
\begin{align}
	\sum\limits_{i,j=1}^{n} u_{ij}^2 \geq& u_{11}^2 + 2\sum\limits_{j=2}^{n}u_{j1}^2 + \sum\limits_{j\geq 2}u_{jj}^2 \notag\\
	\geq & u_{11}^2 + 2\sum\limits_{j=2}^{n} u_{j1}^2 + \dfrac{1}{n-1}\left( \sum\limits_{j\geq 2} u_{jj} \right)^2 \notag\\
	= & u_{11}^2 + 2\sum\limits_{j=2}^{n} u_{j1}^2 + \dfrac{1}{n-1}\left(2 f + \dfrac{p-2}{2}\dfrac{G_k\eta_k}{\eta}-u_{11} \right)^2 \notag\\
	= & \dfrac{4}{n-1}f^2 + \dfrac{n}{n-1}u_{11}^2 + 2\sum\limits_{j=2}^{n}u_{j1}^2 + \dfrac{(p-2)^2}{4(n-1)}\dfrac{(G_k\eta_k)^2}{\eta^2} \notag \\
	& + \dfrac{2(p-2)}{n-1}f\dfrac{G_k\eta_k}{\eta} - \dfrac{p-2}{n-1}\dfrac{G_k\eta_k}{\eta}u_{11} - \dfrac{4}{n-1}fu_{11}.\label{eq:aij1}
\end{align}
From \eqref{first-cond} and \eqref{s3.fk}, we deduce
\begin{align}
    u_{11}= &\dfrac{f_1}{u_1} = \frac{f_1u_1}{u_1^2}\nonumber\\
    =&\frac{f_iG_{ij}u_j}{2f}=\dfrac{f_iG_i}{2f}= -\dfrac{G_i\eta_i}{2\eta}
    \label{u11}
\end{align}
and
\begin{equation}
    \label{eq:fifj}
    \sum\limits_{j=1}^nu_{j1}^2 = \sum\limits_{j=1}^n \dfrac{f_k^2}{u_1^2} = \dfrac{G_{ij}f_if_j}{2f}=f\dfrac{G_{ij}\eta_i\eta_j}{2\eta^2}.
\end{equation}
Combining \eqref{eq:aij1}, \eqref{u11} and \eqref{eq:fifj}, we derive
\begin{align}
    \sum\limits_{i,j=1}^n u_{ij}^2\geq & \dfrac{4}{n-1}f^2 + \dfrac{n}{n-1} \dfrac{G_{ij}f_if_j}{2f} + \dfrac{(p-2)^2}{4(n-1)}\dfrac{(G_k\eta_k)^2}{\eta^2} \notag\\
	&+\dfrac{2(p-2)}{n-1}f\dfrac{G_k\eta_k}{\eta} + \dfrac{p-2}{2(n-1)}\dfrac{(G_k\eta_k)^2}{\eta^2} + \dfrac{2}{n-1}f\dfrac{G_k\eta_k}{\eta} \notag\\
	= & \dfrac{4}{n-1}f^2 + \dfrac{n}{n-1}f \dfrac{G_{ij}\eta_i \eta_j}{2\eta^2} +\dfrac{2(p-1)}{n-1}f\dfrac{G_k\eta_k}{\eta}\nonumber\\
    &\quad+ \dfrac{p(p-2)}{4(n-1)}\dfrac{(G_k\eta_k)^2}{\eta^2}.  \label{eq:aij}
\end{align}
Therefore, combining \eqref{LQ-1} and \eqref{eq:aij}, we obtain
\begin{align}\label{LQ-2}
        \mathscr{L}(Q)\geq&  \frac{4C_{\varepsilon}}{n-1}  \eta f^{\frac{p}{2}+1} + f^{\frac{p}{2}}A_{ik} \eta_{ik} \nonumber\\
        &+\left( -\frac{p+2+2\varepsilon}{2}+\frac{n}{2(n-1)}C_{\varepsilon} \right) f^{\frac{p}{2}}\frac{G_{ij}\eta_{i}\eta_{j}}{\eta} \nonumber\\
        &+\underbrace{\left( \frac{2(p-1)}{n-1}C_{\varepsilon}-2 \right)}_{(a)} f^{\frac{p}{2}}G_{i}\eta_{i} \nonumber\\
        &+\underbrace{\left(  -\frac{p-2}{2}+\frac{p(p-2)}{4(n-1)}C_{\varepsilon} \right)}_{(b)}f^{\frac{p}{2}-1}\frac{(G_{k}\eta_{k})^2}{\eta},
\end{align}
where we set $$C_{\varepsilon}=\left(1-\frac{\sqrt{n}C_{1}^2}{8\varepsilon}\right)>0.$$

Note that for $1<p\leq 2$, $0<C_{\varepsilon}<1$ and $n\geq 2$, there hold
\begin{align*}
    (a) <& \dfrac{2}{n-1}-2\leq 0,\\
    (b) =& (p-2)\left( \dfrac{pC_{\varepsilon}}{4(n-1)}-\dfrac{1}{2}  \right) \geq (p-2)\left( \dfrac{1}{2(n-1)}-\dfrac{1}{2} \right) \geq 0.
\end{align*}
Thus, the last term in \eqref{LQ-2} is nonnegative and can be thrown away. The fourth term in \eqref{LQ-2} can be merged into the first and third terms, by using the inequality
\begin{align*}
	G_k\eta_k =& G_{kl}u_l\eta_k \leq \sqrt{G_{kl}u_lu_k}\sqrt{G_{kl}\eta_l\eta_k} \notag\\
	=& \sqrt{2G} \sqrt{G_{kl}\eta_k\eta_l}\notag\\
	\leq & \dfrac{\eta f}{\mu} + \dfrac{\mu}{2\eta} G_{kl}\eta_{k}\eta_l,
\end{align*}
where $\mu>0$ is a constant which will be determined later. This yields
    \begin{align}
        \mathscr{L}(Q)\geq & \left[ \frac{4C_{\varepsilon}}{n-1}+\frac{2(p-1)C_{\varepsilon}}{(n-1)\mu}-\frac{2}{\mu}  \right] \eta f^{\frac{p}{2}+1}+f^{\frac{p}{2}}A_{ik}\eta_{ik} \notag \\
        &+ \left[ -\frac{p+2+2\varepsilon}{2}+\left( \frac{n+2\mu(p-1)}{2(n-1)}\right)C_{\varepsilon}-\mu \right] f^{\frac{p}{2}}\frac{G_{ij}\eta_{i}\eta_{j}}{\eta}.\label{LQ-3}
    \end{align}

\textbf{Step 3. Estimate the term $A_{ik}\eta_{ik}$}. We choose
\begin{equation*}
    \eta(x)=\theta\left( \dfrac{F^\circ(x-x_0)}{R} \right),
\end{equation*}
where $\theta(t)$ is a cut-off function satisfying $\theta(t)\equiv 1$ for $0\leq t\leq \frac{1}{2}$ and $\theta(t)\equiv 0$ for $t\geq 1$. Additionally, $\theta$ can be chosen such that its derivatives satisfy
\begin{equation}
    \dfrac{(\theta')^2}{\theta}\leq 40,\ \ \theta''\geq -40\theta\geq -40.\label{assum}
\end{equation}
A direct computation gives
 \begin{align}
 	\eta_i = & \theta'\left( \dfrac{F^\circ(x-x_0)}{R} \right)\dfrac{D_i F^\circ(x-x_0)}{R},\nonumber\\
 	\eta_{ij} = & \theta'\left( \dfrac{F^\circ(x-x_0)}{R} \right)\dfrac{D_j D_i F^\circ(x-x_0)}{R} \notag\\
    &+  \theta''\left( \dfrac{F^\circ(x-x_0)}{R} \right) \dfrac{D_i F^\circ(x-x_0)}{R} \dfrac{D_j F^\circ(x-x_0)}{R}.\label{eq:etaij}
 \end{align}
Since $\theta'=\theta''=0$ if $F^\circ(x-x_0)\leq {R}/{2}$, we restrict our consideration to $F^\circ(x-x_0)\geq {R}/{2}$. Otherwise $A_{ik}\eta_{ik}=0$ according to the expression \eqref{eq:etaij}.
%Otherwise, we already have the estimate
%$$A_{ik}\eta_{ik} \geq -\dfrac{40}{R^2}G_{ik}D_iF^\circ(x-x_0)D_j F^{\circ}(x-x_0) \geq -\dfrac{C(n,F)}{R^2}$$
%according to \eqref{assum}, \eqref{eq:etaij} and $(A_{ij})\leq(G_{ij})$ for $1<p\leq 2$.

Let $G^\circ(\zeta)=\dfrac{1}{2}(F^\circ)^2(\zeta)$. Note that
\begin{align}
	&\left( D_i D_j F^\circ(x-x_0)\right)\nonumber \\
    \leq & \left( \dfrac{D_i D_j F^\circ(x-x_0) F^\circ(x-x_0) + D_i F^\circ(x-x_0) D_j F^\circ(x-x_0)}{F^\circ(x-x_0)} \right)\notag \\ =& \left( \dfrac{D_iD_j G^{\circ}(x-x_0)}{F^\circ(x-x_0)}\right),
	\label{eq:ineq1}
\end{align}
we then obtain
\begin{equation*}
	A_{ik}D_iD_k F^\circ(x-x_0) \leq \dfrac{2}{R}A_{ik}D_iD_k G^{\circ}(x-x_0).
\end{equation*}
Furthermore, by the assumption \eqref{assum} on $\theta$ and $(A_{ij})\leq (G_{ij})$ for $p\leq 2$, we have the inequality
\begin{align}
	A_{ik}\eta_{ik} \geq& -\dfrac{80}{R^2} A_{ik}D_iD_k G^{\circ}(x-x_0) \nonumber\\
    &\qquad - \dfrac{40}{R^2}A_{ik}D_i F^\circ(x-x_0) D_k F^\circ(x-x_0) \notag\\
    \geq&-\dfrac{80}{R^2}G_{ik}D_iD_k G^{\circ}(x-x_0)\nonumber\\
    &\qquad -\dfrac{40}{R^2}G_{ik}D_i F^\circ(x-x_0) D_k F^\circ(x-x_0)\nonumber\\
    \geq &-\frac{C(n,F)}{R^2}.
	\label{eq:aikeik}
\end{align}

\textbf{Step 4. Complete the proof}. Combining \eqref{LQ-3} and \eqref{eq:aikeik}, we obtain that at the maximum point of $Q$,
\begin{align}\label{LQ-4}
	0\geq &\mathscr{L}(Q) \nonumber\\
    \geq& \underbrace{\left[   \frac{4C_{\varepsilon}}{n-1}+\frac{2(p-1)C_{\varepsilon}}{(n-1)\mu}-\frac{2}{\mu}  \right]}_{(c)}f^{\frac{p}{2}+1}\eta -\frac{C(n,F)}{R^2}f^{\frac{p}{2}}\notag\\
	&+\underbrace{\left[ -\frac{p+2+2\varepsilon}{2}+\left( \frac{n+2\mu(p-1)}{2(n-1)}\right)C_{\varepsilon}-\mu \right]}_{(d)}f^{\frac{p}{2}}\dfrac{G_{ij}\eta_i\eta_j}{\eta}.
\end{align}
Now we take $\varepsilon=\sqrt{n}C_1^2$ and $\mu=n-1$. Then $C_{\varepsilon}=\dfrac{7}{8}$, and we compute
\begin{align*}
(c) = &\dfrac{3}{2(n-1)}+\dfrac{7(p-1)}{4(n-1)^2},\\
(d) = & -\sqrt{n}C_1^2-\dfrac{2n+1}{2}+\dfrac{3}{8}(p-1)+\dfrac{7n}{16(n-1)}.
\end{align*}
Since $p\geq 1$, we deduce from \eqref{LQ-4} that
\begin{equation}\label{LQ-5}
    0 \geq \dfrac{1}{2(n-1)}Q -\left( \sqrt{n}C_1^2+\dfrac{2n+1}{2} \right)\dfrac{G_{ij}\eta_i\eta_j}{\eta}-\dfrac{C(n,F)}{R^2}.
\end{equation}
On the other hand, from \eqref{assum} we have the bound
\begin{equation}\label{LQ-6}
    \frac{G_{ij}\eta_i\eta_j}{\eta}\leq \frac{C(n,F)}{R^2}.
\end{equation}
Substituting \eqref{LQ-6} into \eqref{LQ-5} implies
\begin{equation*}
	Q \leq \dfrac{C(n,F)}{R^2}.
\end{equation*}
Since $\eta\equiv 1$ on the set where $F^\circ(x-x_0)\leq {R}/{2}$, the desired result \eqref{equ:sharpgradient} follows.
\end{proof}

As a corollary, we obtain the local gradient estimate for the weak solution of IAMCF.

\begin{proof}[Proof of Corollary \ref{s1.cor1}]
    Since $u$ is Lipschitz and $\Omega$ is bounded, the estimate \eqref{s1.eq:sharpgrad} holds automatically on any compact set containing $\Omega$ due to the boundedness of $F(Du)$ and the fact that $F^\circ(x)$ is bounded away from zero. Therefore, it suffices to prove \eqref{s1.eq:sharpgrad} outside a sufficiently large compact set containing $\Omega$. 
    
    Fix $R > 0$ such that $\Omega \subset \mathcal{W}_R$. Now consider any $x \in \mathbb{R}^n \setminus \overline{\mathcal{W}_{4R}}$, which implies $F^\circ(x) > 4R$. We first show that the set $\mathcal{W}_{F^\circ(x) - R}(x)$ is contained in $\mathbb{R}^n \setminus \overline{\mathcal{W}_{R}}$. Indeed, for any $y\in \mathcal{W}_{F^\circ(x)-R}(x)$, we have $F^\circ(x-y) < F^\circ(x)-R$. By the triangle inequality for the norm $F^\circ$,
    $$F^\circ(y) \geq F^\circ(x)-F^\circ(x-y) > F^\circ(x)-(F^\circ(x)-R)=R,$$
    which implies $y \notin \overline{\mathcal{W}_R}$. Hence, $\mathcal{W}_{F^\circ(x) - R}(x) \subset \mathbb{R}^n \setminus \overline{\mathcal{W}_R}$.

    Taking the limit $p \to 1^+$ in the local gradient estimate \eqref{equ:sharpgradient} and applying it to $u$ in $\mathcal{W}_{(F^\circ(x) - R)/2}(x)$ yields
	\begin{equation*}
		F(Du)(x) \leq \dfrac{C(n, F)}{F^\circ(x) - R}.
	\end{equation*}
    Since $F^\circ(x) > 4R$, we have $F^\circ(x) - R \geq \frac{1}{2}F^\circ(x)$. Thus,
	\begin{equation*}
		F(Du)(x) \leq \dfrac{2C(n, F)}{F^\circ(x)}.
	\end{equation*}
	This combined with the bound of $F(Du)$ on the compact set $\overline{\mathcal{W}_{4R}}\setminus\Omega$ implies the estimate \eqref{s1.eq:sharpgrad}. 
\end{proof}

\section{Asymptotic behaviour of weak IAMCF}\label{sec:main}
% C0渐近行为
    With the gradient estimate from Corollary \ref{s1.cor1} in hand, we now prove Theorem \ref{thm:anisotropic-asymptotic} on the asymptotic behavior of the weak IAMCF.  Our proof uses a similar argument as in \cite{HI01} and \cite{BFM24}, and proceeds in three steps.
	
    \textbf{Step 1.} Let $u$ be the weak solution of IAMCF \eqref{eq:WIAMCF} starting from $\Omega$. We show that we can ``blow down'' $u$  to a weak IMCF on $\mathbb{R}^n\setminus\{0\}$. 
    
    For each $s \geq 1$, define the rescaled functions 
    \begin{equation}\label{s4.us}
        u_s (x)=u(sx)-(n-1)\log s,\ \ \forall\ x\in \mathbb{R}^n\backslash\Omega_s,
    \end{equation}
    where the set $\Omega_s=\{\frac{x}{s}:x\in\Omega\}$.  The family $(u_s)_{s \geq 1}$ consists of weak solutions of IAMCF on $\mathbb{R}^n \setminus \Omega_s$. 
    
    By the comparison estimate \eqref{C0 estimates} and gradient estimate \eqref{s1.eq:sharpgrad}, this family $(u_s)_{s\geq 1}$ is uniformly bounded and equi-Lipschitz. In fact, for any $x \in \mathbb{R}^n \setminus \Omega_s$, we have $sx \in \mathbb{R}^n \setminus \Omega$. The estimate \eqref{C0 estimates} implies that 
    \begin{align*}
        u_s(x) &= u(sx) - (n - 1)\log s \\
        &\leq (n - 1)\log F^\circ(sx) -(n-1)\log r_1 - (n - 1)\log s \\
        &= (n - 1)\log F^\circ(x) -(n-1)\log r_1,
    \end{align*}
where we used the 1-homogeneity of $F^\circ$. Similarly, we have 
\begin{align*}
        u_s(x) &\geq  (n - 1)\log F^\circ(x) -(n-1)\log r_2.
    \end{align*}
Here $0<r_1<r_2$ are two constants such that $\mathcal{W}_{r_1}\subset\Omega\subset \mathcal{W}_{r_2}$.  By the gradient estimate \eqref{s1.eq:sharpgrad} and 1-homogeneity of $F$ and $F^\circ$, we also have 
    \begin{equation}\label{s4.pf2}
        F(Du_s(x)) = s F(Du(sx)) \leq s \cdot \frac{C(n, F, \Omega)}{F^\circ(sx)} = \frac{C(n, F, \Omega)}{F^\circ(x)}
    \end{equation}
for any $x\in \mathbb{R}^n\setminus\Omega_s$.  
    
    Therefore, the Arzel\`a-Ascoli theorem implies precompactness of $(u_s)_{s\geq 1}$ with respect to local uniform convergence on $\mathbb{R}^n \setminus \{0\}$. Furthermore, the compactness theorem (see Lemma \ref{lem-compact}) ensures that any limit point $v(x)=\lim\limits_{s_k\to\infty} u_{s_k}(x)$ solves the weak IAMCF \eqref{eq:WIMCF} on $\mathbb{R}^n \setminus \{0\}$, and  satisfies the same estimates
    \begin{align}\label{s4.pf3}
    (n-1)\log F^\circ(x) - (n-1)\log r_2\leq v(x) \leq (n-1)\log F^\circ(x) - (n-1)\log r_1,
    \end{align}
 and
    \begin{equation}\label{s4.pf4}
    F(Dv)(x)\leq \dfrac{C(n,F,\Omega)}{F^\circ(x)}
    \end{equation}
  for all $x\in \mathbb{R}^n\backslash \{0\}$.
	
    {\bf Step 2.} We prove that every blow down limit $v$ of $(u_s)_{s\geq 1}$ is an expanding Wulff shape and has the form
    \begin{equation}\label{s4.pf5}
        v(x)=(n-1)\log F^\circ(x)+\gamma
    \end{equation}
    on $\mathbb{R}^n\backslash\{0\}$ for some $\gamma\in\mathbb{R}$. 
    
    Define the eccentricity function $\theta_v:\mathbb{R}\to\mathbb{R}$ by
    $$\theta_v(t)=\dfrac{R_v(t)}{r_v(t)}$$
    for every $t\in\mathbb{R}$, where $\{x\in\mathbb{R}^n:r_v(t)\leq F^\circ(x)\leq R_v(t)\}$ is the smallest anisotropic annulus containing $\{v=t\}$. Note that the level set $\{v = t\}$ is compact. This follows from the $C^0$ estimate \eqref{s4.pf3}, which implies that $\{v = t\}$ is contained in a bounded annular region $\left\{ r_1\mathrm{e}^{\frac{t}{n-1}} \leq F^\circ(x) \leq r_2 \mathrm{e}^{\frac{t}{n-1}} \right\}$ away from the origin. Consequently, $\theta_v(t)$ is well-defined and finite for all $t$. Clearly, $\theta_v(t)\geq 1$ and $\theta_v(t)=1$ if and only if $\{v=t\}$ is the boundary of a Wulff shape. We next show that $\theta_v(t)\equiv 1$. 
    
    Firstly, we prove that $t\mapsto \theta_v(t)$ is non-increasing. Indeed, for any $\tilde{t}>t$ and any $x\in\{v=\tilde{t}\}$, the comparison principle (Lemma \ref{lem-weakcom}) implies
    \begin{equation*}
    	r_v(t)\mathrm{e}^{\frac{\tilde{t}-t}{n-1}} \leq F^\circ (x) \leq R_v(t)\mathrm{e}^{\frac{\tilde{t}-t}{n-1}}.
    \end{equation*}
    Taking suprema and infima yields
    $$R_v(\tilde{t})=\sup\limits_{x\in\{v=\tilde{t}\}}F^\circ (x)\leq R_v(t)\mathrm{e}^{\frac{\tilde{t}-t}{n-1}},$$
    and
    $$r_v(\tilde{t})=\inf\limits_{x\in\{v=\tilde{t}\}} F^\circ (x) \geq r_v(t)\mathrm{e}^{\frac{\tilde{t}-t}{n-1}}.$$
    Hence,
    $$\theta_v(\tilde{t})=\dfrac{R_v(\tilde{t})}{r_v(\tilde{t})}\leq \dfrac{R_v(t)\mathrm{e}^{\frac{\tilde{t}-t}{n-1}}}{r_v(t)\mathrm{e}^{\frac{\tilde{t}-t}{n-1}}}=\dfrac{R_v(t)}{r_v(t)}=\theta_v(t)$$
    for any $\tilde{t}>t$. 
    
    Secondly, on $\mathbb{R}^n\setminus\{0\}$ it makes sense to ``blow up'' $v$ to another weak IMCF $h$ on $\mathbb{R}^n\setminus\{0\}$.  We can show  that $\theta_h(t)$ is constant and is equal to $\theta_h(0)=\sup_t \theta_v(t)\in [1,+\infty)$. In fact, for any $0<s\leq 1$, consider the rescaled functions
    $$v_s(x)=v(sx)-(n-1)\log s,\ \ x\in\mathbb{R}^n\backslash\{0\}.$$
    Then $v_s$ satisfies the same estimates as in \eqref{s4.pf3} and \eqref{s4.pf4}.  By Arzel\`a-Ascoli theorem, there exists a function $h:\mathbb{R}^n\backslash\{0\}\to\mathbb{R}$ such that $(v_s)_{s\leq 1}$ locally uniformly converges to $h$ as $s\to 0^+$.
    Since 
    \begin{align*}
    	\sup\limits_{x\in\{v_s=t\}} F^\circ(x) =& \sup\limits_{y\in\{v=(n-1)\log s+t\}} F^\circ\left( \frac{y}{s} \right) \\
    	=& \sup\limits_{y\in\{v=(n-1)\log s+t\}} \frac{1}{s}F^\circ\left( y \right) \\
        =& \dfrac{1}{s}R_v((n-1)\log s+t)
    \end{align*}
    and
    \begin{align*}
    	\inf\limits_{x\in\{v_s=t\}} F^\circ(x) =& \inf\limits_{y\in\{v=(n-1)\log s+t\}} F^\circ\left( \frac{y}{s} \right) \\
    	=& \dfrac{1}{s}r_v((n-1)\log s+t),
    \end{align*}
    we have
    $$\theta_{v_s}(t) =\dfrac{R_v((n-1)\log s + t)}{r_v((n-1)\log s + t)} = \theta_v ((n-1)\log s+t).$$
    Therefore,
    $$\theta_h(t) = \lim\limits_{s\to 0^+} \theta_{v_s}(t) =\sup\limits_{-\infty<s<+\infty} \theta_v(s),$$
    which is finite.
    
    Lastly, we prove that
    $$\theta_h(0)=1.$$ Suppose by contradiction that $\theta_h(0)>1$, then the level set $\{h=0\}\subset \{x\in\mathbb{R}^n:r_h(0)\leq F^\circ(x) \leq \theta_h(0)r_h(0)\}$, and it touches both the Wulff shapes $\{F^\circ(x)=r_h(0)\}$ and $\{F^\circ(x)=\theta_h(0)r_h(0)\}$ but differs from both boundary Wulff shapes. We perturb $\{F^\circ(x)\leq r_h(0)\}$ outward and $\{F^\circ(x)\leq \theta_h(0)r_h(0)\}$ inward along their anisotropic normals of the boundary to obtain two domains $U^-$ and $U^+$ respectively satisfying:
    \begin{itemize}
        \item $\{F^\circ(x)\leq r_h(0)\}\subset U^- \subset \{h\leq 0\}$ and $\{h\leq 0\}\subset U^+ \subset \{F^\circ(x)\leq \theta_h(0)r_h(0)\}$;
        \item $U^-$ and $U^+$ are star-shaped with smooth strictly $F$-mean convex boundaries (i.e., the anisotropic mean curvature of the boundary is positive).
    \end{itemize}
    Starting from $\partial U^-$ and $\partial U^+$, the smooth solutions  of inverse anisotropic mean curvature flow exist for all time (see \cite[Theorem 1.1]{Xia17}) and coincide with the corresponding weak solutions (see Lemma \ref{lem-smweak}). Denote by $(U_t^-)_{t\geq 0}$ and $(U_t^+)_{t\geq 0}$ the two weak (and smooth) IAMCF starting at $U^-$ and $U^+$, respectively. By the strong maximum principle for smooth flows, we have
    \begin{equation*}
        F^\circ(x) > r_h(0)\mathrm{e}^{\frac{t}{n-1}}\quad  \mathrm{for}\ x\in\partial U_t^-
    \end{equation*}
and 
     \begin{equation*}
   F^\circ(x) < \theta_h(0)r_h(0)\mathrm{e}^{\frac{t}{n-1}}\ \quad \mathrm{for}\ x\in\partial U_t^{+}.
    \end{equation*}
    The weak comparison principle (see Lemma \ref{lem-weakcom}) also yields the containment
    \begin{equation*}
        U_t^- \subset \{h\leq t\} \subset U_t^+.
    \end{equation*}
    Then it follows that $\theta_h(t)<\theta_h(0)$, contradicting the constancy of $\theta_h$. 
    
    In conclusion $\theta_h(t)\equiv 1$. Since $1\leq \theta_v (t) \leq \theta_h(t)=1$, we conclude that $\theta_v(t)\equiv 1$ and thus $v$ has the form \eqref{s4.pf5}.

    {\bf Step 3.} We prove that the constant $\gamma$ in \eqref{s4.pf5} is given by
    \begin{equation}\label{s4.gamma}
        \gamma = \log\left( \dfrac{|\partial \mathcal{W}|_F}{|\partial\Omega^*|_F} \right).
    \end{equation}
    For the weak solution $u$ of IAMCF starting from $\Omega$, we consider the auxiliary function $$w=\mathrm{e}^{\frac{u-\gamma}{n-1}}.$$ 
    Let  $v$ be a limit  of the rescaled solution $(u_s)_{s\geq 1}$ in \eqref{s4.us}. There exists a subsequence  $s_k\to +\infty$ such that
    $$u_{s_k} \to v = (n-1)\log F^\circ(x)+\gamma$$
    locally uniformly. Hence, for every $\varepsilon>0$, there exists $k_{\varepsilon}\in\mathbb{N}$ such that
    \begin{equation}\label{s4.inclu}
        \left\{F^\circ(x) \leq \dfrac{s_k}{1+\varepsilon} \right\} \subset \{ w\leq s_k \} \subset \left\{ F^\circ(x) \leq \dfrac{s_k}{1-\varepsilon} \right\},\ \ \forall\ k\geq k_{\varepsilon}.
    \end{equation}
    Since the Wulff shape and $\text{int}\{u\leq t\}$ are both strictly outward $F$-minimizing, the inclusion relationship of sets \eqref{s4.inclu} implies the following inequality for the anisotropic perimeters
       $$\left|\left\{ F^\circ(x) = \dfrac{s_k}{1+\varepsilon}\right\} \right|_F \leq |\partial\{w\leq s_k\}|_F \leq \left|\left\{ F^\circ(x)=\dfrac{s_k}{1-\varepsilon} \right\} \right|_F.$$
    By the exponential growth of the anisotropic perimeter under weak IAMCF (\cite[Proposition 3.4]{DGX23}), we have 
    \begin{align*}
        |\partial\{w\leq s_k\}|_F = &|\partial\{u\leq (n-1)\log s_k +\gamma\}|_F\\
        =&\mathrm{e}^{\gamma}s_k^{n-1}|\partial\Omega^*|_F,
    \end{align*}
    where $\Omega^*$ is the strictly outward $F$-minimizing hull of $\Omega$. We also have $|\partial\mathcal{W}_r|_F = r^{n-1} |\partial\mathcal{W}|_F$ for the anisotropic perimeter of the Wulff shape. Therefore, 
    \begin{equation*}
    	\left( \frac{s_k}{1+\varepsilon} \right)^{n-1} |\partial\mathcal{W}|_F 
    	\leq \mathrm{e}^{\gamma} s_k^{n-1} |\partial\Omega^*|_F 
    	\leq \left( \frac{s_k}{1-\varepsilon} \right)^{n-1} |\partial\mathcal{W}|_F.
    \end{equation*}
    Dividing both sides by $s_k^{n-1} |\partial\mathcal{W}|_F > 0$ yields
    \begin{equation*}
    	(1+\varepsilon)^{1-n} 
    	\leq \mathrm{e}^{\gamma} \frac{|\partial\Omega^*|_F}{|\partial\mathcal{W}|_F} 
    	\leq (1-\varepsilon)^{1-n}.
    \end{equation*}
    The result \eqref{s4.gamma} follows by taking $\varepsilon \to 0^+$.

    Finally, for $x$ with $F^\circ(x)\to\infty$, 
    \begin{align*}
        u(x)-(n-1)\log F^\circ(x)=&u\left(F^\circ(x)\cdot \frac{x}{F^\circ(x)}\right)-(n-1)\log F^\circ(x)\\
        \to&(n-1)\log F^\circ\left(\frac{x}{F^\circ(x)}\right)+\gamma\\
        =&\log\left( \dfrac{|\partial \mathcal{W}|_F}{|\partial\Omega^*|_F} \right).
    \end{align*}
    This gives the asymptotical behavior \eqref{s1.ulim} of $u$ in Theorem \ref{thm:anisotropic-asymptotic}. The proof in Step 2 implies that the expanding Wulff shape is the only solution to \eqref{eq:WIAMCF} on $\mathbb{R}^n\setminus\{0\}$ with compact level sets.

\begin{bibdiv}
	\begin{biblist}
		\bibliographystyle{amsplain}

\bib{BFM24}{article}{
   author={Benatti, Luca},
   author={Fogagnolo, Mattia},
   author={Mazzieri, Lorenzo},
   title={The asymptotic behaviour of $p$-capacitary potentials in
   asymptotically conical manifolds},
   journal={Math. Ann.},
   volume={388},
   date={2024},
   number={1},
   pages={99--139},
   %issn={0025-5831},
   %review={\MR{4693930}},
   %doi={10.1007/s00208-022-02515-4},
}

%\bib{BC18}{article}{
%	author={Bianchini, Chiara},
%	author={Ciraolo, Giulio},
%	title={Wulff shape characterizations in overdetermined anisotropic
%		elliptic problems},
%	journal={Comm. Partial Differential Equations},
%	volume={43},
%	date={2018},
%	number={5},
%	pages={790--820},
	%issn={0360-5302},
	%review={\MR{3920523}},
	%doi={10.1080/03605302.2018.1475488},
%}

\bib{CMS24}{article}{
   author={Cabezas-Rivas, Esther},
   author={Moll, Salvador},
   author={Solera, Marcos},
   title={Weak solutions of anisotropic (and crystalline) inverse mean
   curvature flow as limits of $p$-capacitary potentials},
   journal={J. Funct. Anal.},
   volume={287},
   date={2024},
   number={11},
   pages={Paper No. 110642, 56},
   %issn={0022-1236},
   %review={\MR{4793761}},
   %doi={10.1016/j.jfa.2024.110642},
}

%\bib{CD21}{article}{
%   author={Choi, Beomjun},
%   author={Daskalopoulos, Panagiota},
%   title={Evolution of noncompact hypersurfaces by inverse mean curvature},
%   journal={Duke Math. J.},
%   volume={170},
%   date={2021},
%   number={12},
%   pages={2755--2803},
   %issn={0012-7094},
   %review={\MR{4305381}},
   %doi={10.1215/00127094-2020-0081},
%}

\bib{CY75}{article}{
   author={Cheng, S. Y.},
   author={Yau, S. T.},
   title={Differential equations on Riemannian manifolds and their geometric
   applications},
   journal={Comm. Pure Appl. Math.},
   volume={28},
   date={1975},
   number={3},
   pages={333--354},
   issn={0010-3640},
%   review={\MR{0385749}},
%   doi={10.1002/cpa.3160280303},
}

\bib{CS09}{article}{
   author={Cianchi, Andrea},
   author={Salani, Paolo},
   title={Overdetermined anisotropic elliptic problems},
   journal={Math. Ann.},
   volume={345},
   date={2009},
   number={4},
   pages={859--881},
 %  issn={0025-5831},
 %  review={\MR{2545870}},
 %  doi={10.1007/s00208-009-0386-9},
}

\bib{CL07}{article}{
   author={Clutterbuck, Julie},
   title={Interior gradient estimates for anisotropic mean-curvature flow},
   journal={Pacific J. Math.},
   volume={229},
   date={2007},
   number={1},
   pages={119--136},
   %issn={0030-8730},
   %review={\MR{2276504}},
   %doi={10.2140/pjm.2007.229.119},
}

%\bib{DG16}{article}{
%   author={de Lima, Levi Lopes},
%   author={Gir\~ao, Frederico},
%   title={An Alexandrov-Fenchel-type inequality in hyperbolic space with an
%   application to a Penrose inequality},
%   journal={Ann. Henri Poincar\'e},
%   volume={17},
%   date={2016},
%   number={4},
%   pages={979--1002},
   %issn={1424-0637},
   %review={\MR{3472630}},
   %doi={10.1007/s00023-015-0414-0},
%}

%\bib{DT22}{article}{
%   author={De Rosa, Antonio},
%   author={Tione, Riccardo},
%   title={Regularity for graphs with bounded anisotropic mean curvature},
%   journal={Invent. Math.},
%   volume={230},
%   date={2022},
%   number={2},
%   pages={463--507},
   %issn={0020-9910},
   %review={\MR{4493323}},
   %doi={10.1007/s00222-022-01129-6},
%}

\bib{DGX23}{article}{
   author={Della Pietra, Francesco},
   author={Gavitone, Nunzia},
   author={Xia, Chao},
   title={Motion of level sets by inverse anisotropic mean curvature},
   journal={Comm. Anal. Geom.},
   volume={31},
   date={2023},
   number={1},
   pages={97--118},
   %issn={1019-8385},
   %review={\MR{4652511}},
   %doi={10.4310/cag.2023.v31.n1.a4},
}

%\bib{DY24}{article}{
%   author={Du, Wenkui},
%   author={Yang, Yang},
%   title={Flatness of anisotropic minimal graphs in $\mathbb{R}^{n+1}$},
%   journal={Math. Ann.},
%   volume={390},
%   date={2024},
%   number={4},
%   pages={4931--4949},
   %issn={0025-5831},
   %review={\MR{4816099}},
   %doi={10.1007/s00208-024-02869-x},
%}

%\bib{EH91}{article}{
%   author={Ecker, Klaus},
%   author={Huisken, Gerhard},
%   title={Interior estimates for hypersurfaces moving by mean curvature},
%   journal={Invent. Math.},
%   volume={105},
%   date={1991},
%   number={3},
%   pages={547--569},
   %issn={0020-9910},
   %review={\MR{1117150}},
   %doi={10.1007/BF01232278},
%}

%\bib{GT01}{book}{
%  author = {Gilbarg, David},
%  author = {Neil S. Trudinger},
%  title = {Elliptic Partial Differential Equations of Second Order},
%  edition = {2},
%  Publisher = {Springer Berlin, Heidelberg},
%  date = {2001},
%}

\bib{Ge90}{article}{
   author={Gerhardt, Claus},
   title={Flow of nonconvex hypersurfaces into spheres},
   journal={J. Differential Geom.},
   volume={32},
   date={1990},
   number={1},
   pages={299--314},
   %issn={0022-040X},
   %review={\MR{1064876}},
}

%\bib{Har23}{article}{
%    author={Harvie, Brian},
%    title={Inverse mean curvature flow of rotationally symmetric hypersurfaces},
%    journal={Calculus of Variations and Partial Differential Equations},
%  volume={62},
%  number={4},
%  pages={125},
%  year={2023},
%  publisher={Springer}
%}

%\bib{Har24}{article}{
%   author={Harvie,Brian},
%   title={On weak inverse mean curvature flow and Minkowski-type inequalities in hyperbolic space},
%   journal={arXiv:2404.08410v3},
%   date={2024},
%}

%\bib{H99}{article}{
%   author={Holopainen, Ilkka},
%   title={Volume growth, Green's functions, and parabolicity of ends},
%   journal={Duke Math. J.},
%   volume={97},
%   date={1999},
%   number={2},
%   pages={319--346},
   %issn={0012-7094},
   %review={\MR{1682233}},
   %doi={10.1215/S0012-7094-99-09714-4},
%}

\bib{HI01}{article}{
   author={Huisken, Gerhard},
   author={Ilmanen, Tom},
   title={The inverse mean curvature flow and the Riemannian Penrose
   inequality},
   journal={J. Differential Geom.},
   volume={59},
   date={2001},
   number={3},
   pages={353--437},
   %issn={0022-040X},
   %review={\MR{1916951}},
}
    
    \bib{HI08}{article}{
        author={Huisken, Gerhard},
        author={Ilmanen, Tom},
        title={Higher regularity of the inverse mean curvature flow},
        journal={J. Differential Geom.},
        volume={80},
        date={2008},
        number={3},
        pages={433--451},
        %issn={0022-040X},
        %review={\MR{2472479}},
}

\bib{HW15}{article}{
   author={Hung, Pei-Ken},
   author={Wang, Mu-Tao},
   title={Inverse mean curvature flows in the hyperbolic 3-space revisited},
   journal={Calc. Var. Partial Differential Equations},
   volume={54},
   date={2015},
   number={1},
   pages={119--126},
 %  issn={0944-2669},
 %  review={\MR{3385155}},
%   doi={10.1007/s00526-014-0780-3},
}

\bib{KN09}{article}{
   author={Kotschwar, Brett},
   author={Ni, Lei},
   title={Local gradient estimates of $p$-harmonic functions, $1/H$-flow,
   and an entropy formula},
   %language={English, with English and French summaries},
   journal={Ann. Sci. \'Ec. Norm. Sup\'er. (4)},
   volume={42},
   date={2009},
   number={1},
   pages={1--36},
   %issn={0012-9593},
   %review={\MR{2518892}},
   %doi={10.24033/asens.2089},
}

%\bib{LW17}{article}{
%   author={Li, Haizhong},
%   author={Wei, Yong},
%   title={On inverse mean curvature flow in Schwarzschild space and Kottler
%   space},
%   journal={Calc. Var. Partial Differential Equations},
%   volume={56},
%   date={2017},
%   number={3},
%   pages={Paper No. 62, 21},
   %issn={0944-2669},
   %review={\MR{3639616}},
   %doi={10.1007/s00526-017-1160-6},
%}

%\bib{Lie96}{book}{
%   author = {Gray M. Lieberman},
%   title = {Second Order Parabolic Differential Equations},
%   publisher = {World Scientific, Singapore},
%   date = {1996},
   
%}

\bib{MRS22}{article}{
   author={Mari, Luciano},
   author={Rigoli, Marco},
   author={Setti, Alberto G.},
   title={On the $1/H$-flow by $p$-Laplace approximation: new estimates via
   fake distances under Ricci lower bounds},
   journal={Amer. J. Math.},
   volume={144},
   date={2022},
   number={3},
   pages={779--849},
   %issn={0002-9327},
   %review={\MR{4436145}},
   %doi={10.1353/ajm.2022.0016},
}

\bib{M07}{article}{
   author={Moser, Roger},
   title={The inverse mean curvature flow and $p$-harmonic functions},
   journal={J. Eur. Math. Soc. (JEMS)},
   volume={9},
   date={2007},
   number={1},
   pages={77--83},
   %issn={1435-9855},
   %review={\MR{2283104}},
   %doi={10.4171/JEMS/73},
}

\bib{Ne09}{article}{
   author={Neves, Andr\'e},
   title={Insufficient convergence of inverse mean curvature flow on
   asymptotically hyperbolic manifolds},
   journal={J. Differential Geom.},
   volume={84},
   date={2010},
   number={1},
   pages={191--229},
 %  issn={0022-040X},
%   review={\MR{2629514}},
}

\bib{Sc16}{article}{
   author={Scheuer, Julian},
   title={Pinching and asymptotical roundness for inverse curvature flows in
   Euclidean space},
   journal={J. Geom. Anal.},
   volume={26},
   date={2016},
   number={3},
   pages={2265--2281},
  % issn={1050-6926},
  % review={\MR{3511477}},
%   doi={10.1007/s12220-015-9627-1},
}

\bib{Ur90}{article}{
   author={Urbas, John I. E.},
   title={On the expansion of starshaped hypersurfaces by symmetric
   functions of their principal curvatures},
   journal={Math. Z.},
   volume={205},
   date={1990},
   number={3},
   pages={355--372},
   %issn={0025-5874},
   %review={\MR{1082861}},
   %doi={10.1007/BF02571249},
}

\bib{WZ11}{article}{
   author={Wang, Xiaodong},
   author={Zhang, Lei},
   title={Local gradient estimate for $p$-harmonic functions on Riemannian
   manifolds},
   journal={Comm. Anal. Geom.},
   volume={19},
   date={2011},
   number={4},
   pages={759--771},
   %issn={1019-8385},
   %review={\MR{2880214}},
   %doi={10.4310/CAG.2011.v19.n4.a4},
}

%\bib{WX21}{article}{
%   author={Wei, Yong},
%   author={Xiong, Changwei},
%   title={A volume-preserving anisotropic mean curvature type flow},
%   journal={Indiana Univ. Math. J.},
%   volume={70},
%   date={2021},
%   number={3},
%   pages={881--905},
   %issn={0022-2518},
   %review={\MR{4284100}},
   %doi={10.1512/iumj.2021.70.8337},
%}

\bib{Xia13}{article}{
   author={Xia, Chao},
   title={On an anisotropic Minkowski problem},
   journal={Indiana Univ. Math. J.},
   volume={62},
   date={2013},
   number={5},
   pages={1399--1430},
%   issn={0022-2518},
%   review={\MR{3188548}},
%   doi={10.1512/iumj.2013.62.5083},
}

\bib{Xia17}{article}{
        author={Xia, Chao},
        title={Inverse anisotropic mean curvature flow and a Minkowski type
        inequality},
        journal={Adv. Math.},
        volume={315},
        date={2017},
        pages={102--129},
        %issn={0001-8708},
        %review={\MR{3667582}},
        %doi={10.1016/j.aim.2017.05.020},
}

\bib{XY22}{article}{
   author={Xia, Chao},
   author={Yin, Jiabin},
   title={The anisotropic $p$-capacity and the anisotropic Minkowski
   inequality},
   journal={Sci. China Math.},
   volume={65},
   date={2022},
   number={3},
   pages={559--582},
   issn={1674-7283},
%   review={\MR{4375862}},
%   doi={10.1007/s11425-021-1884-1},
}

\bib{XQ22}{article}{
   author={Xia, Qiaoling},
   title={Local and global gradient estimates for Finsler $p$-harmonic
   functions},
   journal={Comm. Anal. Geom.},
   volume={30},
   date={2022},
   number={2},
   pages={451--499},
%   issn={1019-8385},
%   review={\MR{4516075}},
%   doi={10.4310/cag.2022.v30.n2.a6},
}

	\end{biblist}
\end{bibdiv}

\end{document}